\numberwithin{equation}{section}
\numberwithin{figure}{section}
\theoremstyle{plain}
\newtheorem{theorem}{Theorem}[section]
\newtheorem{lemma}[theorem]{Lemma}
\newtheorem{proposition}[theorem]{Proposition}
\newtheorem{corollary}[theorem]{Corollary}
\newtheorem{identity}[theorem]{Identity}
\theoremstyle{definition}
\newtheorem{definition}[theorem]{Definition}
\theoremstyle{remark}
\newtheorem{remark}[theorem]{Remark}
\newtheorem{example}[theorem]{Example}
\begin{document}

\title[Graded Expectations]{Graded Expectations:  Betti numbers and anti-lecture hall compositions of random threshold graphs \\  \bigskip A tale of probabilistic commutative algebra}

\author[A. Engstr\"om]{Alexander Engstr\"om}
\address{Department of Mathematics, Aalto University, Helsinki, Finland}
\email{alexander.engstrom@aalto.fi}

\author[C. Go]{Christian Go}
\address{Department of Mathematics, National University of Singapore, Singapore}
\email{christian.go@nus.edu.sg}

\author[M.T. Stamps]{Matthew T. Stamps}
\address{Division of Science, Yale-NUS College, Singapore}
\email{matt.stamps@yale-nus.edu.sg}

\begin{abstract}
This paper examines the one-to-one-to-one correspondence between threshold graphs, Betti numbers of quotients of polynomial rings by $2$-linear ideals, and anti-lecture hall compositions.  In particular, we establish new explicit combinatorial mappings between each of these classes of objects and calculate the expected values of the Betti numbers and anti-lecture hall composition corresponding to a random threshold graph.
\end{abstract}

\maketitle

\section{Introduction}

A fundamental task in commutative algebra, originating with Hilbert, is to characterize the graded Betti numbers of finitely-generated modules over a polynomial ring.  This largely eluded mathematicians until the emergence of Boij-S\"oderberg theory, which established that Betti diagrams can be decomposed as rational combinations of a prescribed set of tables.  In addition to overcoming the challenge of characterization, Boij-S\"oderberg theory has a natural combinatorial flavor that has strengthened the role of combinatorial commutative algebra (see the papers of Cook II \cite{Cook}, Erman and Sam \cite{ES17}, Herzog, Sharifan, and Varbaro \cite{HSV14}, or Nagel and Sturgeon \cite{NS13} for specific examples).  In the paper \cite{ES13}, Engstr\"om and Stamps showed that the Betti numbers of quotients of polynomial rings by the well-studied class of $2$-linear ideals are in one-to-one correspondence with a well-studied class of graphs called threshold graphs.  The major upshot of this correspondence is that one can not only determine if a table of integers is the Betti diagram associated to one of those ideals, but also quickly find an explicit example when the answer is positive.   

This paper aims to enrich the one-to-one-to-one correspondence between threshold graphs, Betti numbers of quotients by $2$-linear ideals, and anti-lecture hall compositions by establishing new explicit combinatorial mappings between each pair of classes of these objects.  We also consider a probabilistic approach to commutative algebra in which we try to understand what properties one can expect to observe in the Betti diagram of an ideal sampled from a given distribution on the set of possible Betti diagrams.  For the case of $2$-linear ideals, we are able to use the correspondence in \cite{ES13} and a random model for threshold graphs to calculate expected values of the Betti numbers and anti-lecture hall composition for a $1$-parameter family of distributions, including the uniform distribution.

We have organized the paper as follows:  Section 2 reviews the main objects of interest and describes the one-to-one-to-one correspondence between them.  Section 3 then presents new explicit combinatorial mappings between the main objects. Finally, Section 4 contains closed formulas for the expected Betti numbers and anti-lecture hall composition associated to a random threshold graph. We have attempted to use standard notation wherever possible to make it easier for the more knowledgeable reader to skip ahead to later sections.

\section{Preliminaries}

We begin by reviewing the central objects of this paper, revisiting the main definitions and the relevant results regarding threshold graphs, Betti numbers of quotients of polynomial rings, and anti-lecture hall compositions.

\subsection{Threshold Graphs} 

The first central class of objects we consider in this paper are threshold graphs.  A \emph{graph} is a pair $G = (V,E)$ where $V$ is a finite set and $E$ is a subset of pairs of elements in $V$.  The set $V = V(G)$ is called the \emph{vertex set} of $G$ and the set $E = E(G)$ is called the \emph{edge set} of $G$. For a comprehensive introduction to graph theory, we recommend the textbooks by Bollob\'as~\cite{Bollobas} and Diestel~\cite{Diestel}.


\begin{definition}
A graph $G$ is called \emph{threshold} if there exists a function $\omega : V(G) \to \mathbb{R}$ and a real number $t \in \mathbb{R}$ such that $uv \in E(G)$ if and only if $\omega(u) + \omega(v) \geq t$.  
\end{definition}

\begin{example}
The following graph is threshold by taking $t = 3$, $\omega(0) = \omega(1) = \omega(2) = 1$, $\omega(3) = 2$, $\omega(4) = 0$, and $\omega(5) = 3$.  
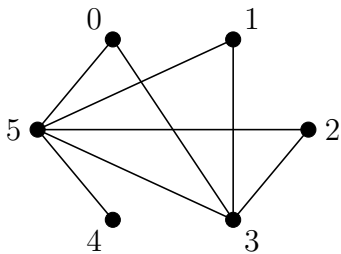
\begin{figure}
    \centering
\begin{tikzpicture}
\draw [fill] (0,2.4) circle [radius=0.1];
\node [above left] at (0,2.4) {$0$};
\draw [fill] (1.6,2.4) circle [radius=0.1];
\node [above right] at (1.6,2.4) {$1$};
\draw [fill] (2.6,1.2) circle [radius=0.1];
\node [right] at (2.6,1.2) {$ \, 2$};
\draw [fill] (1.6,0) circle [radius=0.1];
\node [below right] at (1.6,0) {$3$};
\draw [fill] (0,0) circle [radius=0.1];
\node [below left] at (0,0) {$4$};
\draw [fill] (-1,1.2) circle [radius=0.1];
\node [left] at (-1,1.2) {$5 \, $};
\draw [semithick] (1.6,0) -- (0,2.4);
\draw [semithick] (1.6,0) -- (1.6,2.4);
\draw [semithick] (1.6,0) -- (2.6,1.2);
\draw [semithick] (-1,1.2) -- (0,2.4);
\draw [semithick] (-1,1.2) -- (1.6,2.4);
\draw [semithick] (-1,1.2) -- (2.6,1.2);
\draw [semithick] (-1,1.2) -- (1.6,0);
\draw [semithick] (-1,1.2) -- (0,0);
\end{tikzpicture}
    \caption{Threshold graph on six vertices.}
    \label{fig:thresholdgraphexample}
\end{figure}
\end{example}

Threshold graphs were introduced by Chvat\'al and Hammer in \cite{CH77} and \cite{CH73} and have been studied extensively from structural graph theory, (see Mahadev and Peled's book \cite{MP95}) to complex networks (see the paper of Hagberg, Swart, and Schult \cite{HSS06}). They are particularly convenient to work with computationally---Heggernes and Kratsch showed in \cite{HK07}, for instance, that one can determine whether or not a graph is threshold in linear time. Furthermore, they have several equivalent characterizations which are summarized in Theorem 1.2.4 of \cite{MP95}.  The most relevant characterization for the paper at hand is that threshold graphs can be constructed from a single vertex by applying a sequence of two operations. 

In a graph $G$ with vertex set $V(G) = \{0,1,\ldots,m\}$, a vertex $v$ is called \emph{dominating} if $uv \in E(G)$ for every $u < v$ in $V(G)$ and \emph{isolated} if $uv \notin E(G)$ for every $u < v$ in $V(G)$. 

\begin{proposition}[Theorem 1.2.4, \cite{MP95}]\label{prop:char}
A graph $G$ on $m+1$ vertices is threshold if and only if its vertices can be labeled $0$ through $m$ such that every vertex in $\{1,\ldots,m\}$ is isolated or dominating.  
\end{proposition}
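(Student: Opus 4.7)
The plan is to prove both directions by induction on $m$, with the forward direction relying on a structural lemma extracted from the weight-threshold representation, and the converse relying on a simple extension argument. The key idea underlying both directions is that adding an isolated or dominating vertex with appropriately chosen weight preserves thresholdness, and conversely that any threshold graph has such a vertex to ``peel off.''

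For the forward direction, suppose $G$ is threshold with weight function $\omega$ and threshold $t$. I would prove the following key lemma: either there is a vertex of $G$ with no neighbors or there is a vertex adjacent to all other vertices. To see this, let $v$ be a vertex minimizing $\omega$. If $v$ has no neighbors in $G$, we are done. Otherwise, there exists $w \in V(G)$ with $\omega(v) + \omega(w) \geq t$, and then for every other $u \in V(G) \setminus \{w\}$ we have $\omega(u) + \omega(w) \geq \omega(v) + \omega(w) \geq t$ since $\omega(u) \geq \omega(v)$, so $w$ is adjacent to every other vertex. Now assign the label $m$ to such an isolated or dominating vertex, remove it from $G$, and observe that the induced subgraph is still threshold via the restriction of $\omega$ and the same $t$. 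Applying the inductive hypothesis produces a valid labeling on $\{0, \ldots, m-1\}$, and the chosen label $m$ is clearly isolated or dominating relative to this smaller set.

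For the converse, suppose $V(G) = \{0, 1, \ldots, m\}$ with the property that every vertex in $\{1, \ldots, m\}$ is isolated or dominating relative to the lower-labeled vertices. By induction, the induced subgraph $G'$ on $\{0, \ldots, m-1\}$ is threshold, so there exist $\omega' : V(G') \to \mathbb{R}$ and $t \in \mathbb{R}$ realizing $G'$. Extend to $\omega : V(G) \to \mathbb{R}$ by setting $\omega(m)$ sufficiently small (say $\omega(m) < t - \max_{u < m} \omega'(u)$) if $m$ is isolated, and sufficiently large (say $\omega(m) \geq t - \min_{u < m} \omega'(u)$) if $m$ is dominating. This extension adds exactly the right edges incident to $m$ without altering any edge of $G'$, so $\omega$ and $t$ realize $G$ as threshold.

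The only subtle step is the structural lemma in the forward direction; once one sees the trick of comparing $\omega(u)+\omega(w)$ to $\omega(v)+\omega(w)$ using the minimality of $\omega(v)$, the rest of the argument is routine bookkeeping. I expect the inductive peeling and the weight-extension in the converse to require no real creativity beyond choosing the new weight outside the relevant range.
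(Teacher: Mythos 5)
Your proof is correct: the minimum-weight-vertex argument cleanly yields that a threshold graph always has either a vertex with no neighbors or a vertex adjacent to all others, and the weight-extension in the converse adds exactly the edges required. The paper itself cites this characterization from \cite{MP95} without proof, and your argument is essentially the standard peeling proof given there, so there is nothing to add beyond noting that the trivial base case $m=0$ should be stated explicitly.
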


For $n \in \mathbb{N}$ and $\sigma \subseteq \{1,\ldots,n\}$, let $T(n,\sigma)$ denote the threshold graph with vertex set $\{0,1,\ldots,n\}$ whose isolated vertices are precisely the elements in $\sigma$.  In this notation, the graph in Figure~\ref{fig:thresholdgraphexample} is $T(5,\{1,2,4\})$.  The characterization in Proposition~\ref{prop:char} has the additional property that a threshold graph is completely determined by its isolated (or, equivalently, its dominating) vertices:

\begin{theorem}[Theorem 3.2.2, \cite{MP95}] A pair of threshold graphs $T(n,\sigma)$ and $T(m,\tau)$ are isomorphic if and only if $n = m$ and $\sigma = \tau$.  
\end{theorem}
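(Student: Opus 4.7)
The ($\Leftarrow$) direction is immediate, so the plan is to focus on ($\Rightarrow$). Since isomorphic graphs have the same number of vertices, we get $n = m$ for free, and the real content is proving $\sigma = \tau$. I would do this by induction on $n$, with the base case $n = 0$ being trivial because $\sigma,\tau \subseteq \emptyset$.

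The key observation I plan to use is that vertex $n$ in $T(n,\sigma)$ has a very predictable degree: if $n \in \sigma$ then vertex $n$ is isolated and there are no later vertices to attach to it, so $\deg(n) = 0$; if $n \notin \sigma$ then vertex $n$ is dominating, so $\deg(n) = n$. These two cases are mutually exclusive at the level of isomorphism classes, because in any graph a vertex of degree $0$ and a vertex of degree $|V|-1$ cannot coexist (the universal vertex would have to be adjacent to the isolated one). Consequently, $T(n,\sigma)$ has a vertex of degree $0$ if and only if $n \in \sigma$, and this is a graph-theoretic property preserved under isomorphism. So $T(n,\sigma) \cong T(n,\tau)$ forces $n \in \sigma \Leftrightarrow n \in \tau$.

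From there, the plan is to delete a distinguished vertex in each graph and recurse. A direct check from the definition shows that deleting vertex $n$ from $T(n,\sigma)$ produces $T(n-1, \sigma \setminus \{n\})$: for each $j < n$, the neighborhood relationship of $j$ with $\{0, \dots, j-1\}$ is identical in both graphs, so the isolated/dominating classification on $\{1,\dots,n-1\}$ is unchanged. I would then argue that any two isolated vertices (respectively, any two universal vertices) of a graph lie in the same orbit of its automorphism group---the transposition of two such vertices is itself an automorphism---so if $\phi : T(n,\sigma) \to T(n,\tau)$ is an isomorphism, restricting $\phi$ to the complement of an isolated (or universal) vertex $v$ gives an isomorphism $T(n-1, \sigma \setminus \{n\}) \cong T(n-1, \tau \setminus \{n\})$. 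The inductive hypothesis then yields $\sigma \setminus \{n\} = \tau \setminus \{n\}$, and combining this with the earlier equivalence $n \in \sigma \Leftrightarrow n \in \tau$ gives $\sigma = \tau$.

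The main subtlety---and the step I would expect to require the most care---is the deletion step: I need to be sure that removing an isolated (or universal) vertex from the graph is well-defined up to isomorphism regardless of which such vertex is chosen, and that the resulting graph still matches the expected labeled form $T(n-1, \sigma \setminus \{n\})$. The automorphism argument handles the first point, and the labeling is essentially tautological from the construction in Proposition~\ref{prop:char}, but I would make both explicit rather than leaving them implicit.
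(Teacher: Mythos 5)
Your proposal is correct, but there is nothing in the paper to compare it against: the paper states this result as a citation (Theorem 3.2.2 of \cite{MP95}) and gives no proof of its own. Your argument is a sound self-contained proof of the standard ``peel off the last vertex'' type. The three ingredients all check out: (i) $T(n,\sigma)$ has a vertex of degree $0$ if and only if $n \in \sigma$, since when $n \notin \sigma$ vertex $n$ is universal and kills any degree-$0$ vertex, and this is isomorphism-invariant; (ii) the transposition of two degree-$0$ vertices, or of two universal vertices, is indeed an automorphism, so you may compose with it to assume the isomorphism fixes the vertex $n$ and then restrict; (iii) deleting vertex $n$ from $T(n,\sigma)$ yields exactly $T(n-1,\sigma\setminus\{n\})$, because the isolated/dominating status of each $j < n$ depends only on its adjacencies to $\{0,\dots,j-1\}$, which are untouched. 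With the base case $n=0$ trivial, the induction closes. The only stylistic suggestion is to state explicitly that you split into the two cases $n \in \sigma$ (work with degree-$0$ vertices) and $n \notin \sigma$ (work with universal vertices), having already ruled out the mixed case by (i); as written this is implicit in ``isolated (or universal).''
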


It is thus straightforward to enumerate the threshold graphs on a given set of vertices.

\begin{corollary}
There are $2^n$ threshold graphs on $n+1$ vertices up to isomorphism. 
\end{corollary}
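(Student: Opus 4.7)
The plan is to observe that the corollary follows almost immediately from the two results cited just before it, so the whole argument reduces to a bijective count.

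First I would invoke Proposition~\ref{prop:char}: every threshold graph on $n+1$ vertices admits a labeling $0,1,\ldots,n$ such that each of the vertices $1,\ldots,n$ is either isolated or dominating. Thus, up to relabeling, any such graph can be written in the form $T(n,\sigma)$ for some subset $\sigma \subseteq \{1,\ldots,n\}$ (the set of isolated vertices among $1,\ldots,n$). In other words, the map $\sigma \mapsto T(n,\sigma)$ from $2^{\{1,\ldots,n\}}$ to the set of isomorphism classes of threshold graphs on $n+1$ vertices is surjective.

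Next I would invoke the preceding theorem (Theorem 3.2.2 of \cite{MP95}), which asserts that $T(n,\sigma) \cong T(n,\tau)$ forces $\sigma=\tau$. This says exactly that the surjection above is also injective, hence a bijection between $2^{\{1,\ldots,n\}}$ and the set of isomorphism classes of threshold graphs on $n+1$ vertices.

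Finally, since $\lvert 2^{\{1,\ldots,n\}} \rvert = 2^n$, the count follows. There is no real obstacle here; the only thing one might want to double-check is that Proposition~\ref{prop:char} indeed guarantees a \emph{valid} expression $T(n,\sigma)$ for every threshold graph on $n+1$ vertices (rather than just producing some labeling), but this is immediate from the definition of the notation $T(n,\sigma)$ introduced right after the proposition.
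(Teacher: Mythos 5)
Your proposal is correct and is exactly the argument the paper intends: the corollary is presented as an immediate consequence of Proposition~\ref{prop:char} (every threshold graph on $n+1$ vertices has the form $T(n,\sigma)$) together with the preceding theorem (distinct $\sigma$ give non-isomorphic graphs), yielding a bijection with the $2^n$ subsets of $\{1,\ldots,n\}$. The paper simply leaves this short deduction unwritten, so there is nothing to add or correct.
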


Threshold graphs belong to a larger class of graphs called \emph{chordal} graphs.  To define chordal graphs, we need the notion of an \emph{induced cycle}:  given a graph $G$ and a subset $W \subseteq V(G)$, the \emph{induced subgraph} of $G$ on $W$ is the graph $G[W]$ whose vertex set is $W$ and whose edge set is the set of all edges in $G$ with both endpoints in $W$.  A \emph{cycle} of length $m$ is a collection of edges of the form $$v_1v_2, v_2v_3, \ldots, v_{m-1}v_m, \text{ and } v_1v_m.$$ 

\begin{definition}
A graph $G$ is \emph{chordal} if it has no induced cycles of length more than $3$.
\end{definition}

It is well known that threshold graphs are chordal, but not all chordal graphs are threshold. Consider, for instance, the graphs in the following example:

\begin{example}
\begin{figure}\label{fig:example_graphs}
    \centering
\begin{tikzpicture}
\draw [fill] (0,1.7) circle [radius=0.1];
\node [above left] at (0,1.7) {$0$};
\draw [fill] (1.7,1.7) circle [radius=0.1];
\node [above right] at (1.7,1.7) {$1$};
\draw [fill] (1.7,0) circle [radius=0.1];
\node [below right] at (1.7,0) {$2$};
\draw [fill] (0,0) circle [radius=0.1];
\node [below left] at (0,0) {$3$};
\draw [semithick] (1.7,0) -- (0,0);
\draw [semithick] (0,1.7) -- (0,0);
\draw [semithick] (1.7,1.7) -- (0,0);
\end{tikzpicture}
\qquad
\quad
\begin{tikzpicture}
\draw [fill] (0,1.7) circle [radius=0.1];
\node [above left] at (0,1.7) {$0$};
\draw [fill] (1.7,1.7) circle [radius=0.1];
\node [above right] at (1.7,1.7) {$1$};
\draw [fill] (1.7,0) circle [radius=0.1];
\node [below right] at (1.7,0) {$2$};
\draw [fill] (0,0) circle [radius=0.1];
\node [below left] at (0,0) {$3$};
\draw [semithick] (0,0) -- (0,1.7);
\draw [semithick] (0,1.7) -- (1.7,1.7);
\draw [semithick] (1.7,1.7) -- (1.7,0);
\end{tikzpicture}
\qquad
\quad
\begin{tikzpicture}
\draw [fill] (0,1.7) circle [radius=0.1];
\node [above left] at (0,1.7) {$0$};
\draw [fill] (1.7,1.7) circle [radius=0.1];
\node [above right] at (1.7,1.7) {$1$};
\draw [fill] (1.7,0) circle [radius=0.1];
\node [below right] at (1.7,0) {$2$};
\draw [fill] (0,0) circle [radius=0.1];
\node [below left] at (0,0) {$3$};
\draw [semithick] (0,0) -- (0,1.7);
\draw [semithick] (0,1.7) -- (1.7,1.7);
\draw [semithick] (1.7,1.7) -- (1.7,0);
\draw [semithick] (1.7,0) -- (0,0);
\end{tikzpicture}
    \caption{Examples and non-examples of threshold and chordal graphs}
\end{figure}
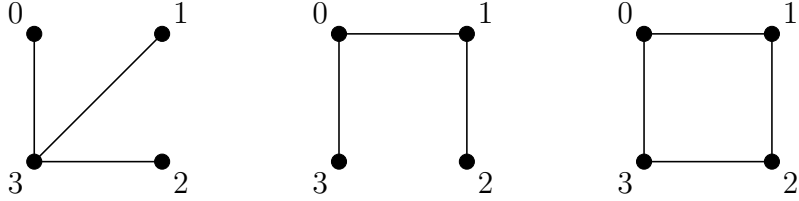
The graph on the left in Figure~\ref{fig:example_graphs} is the threshold graph $T(3,\{1,2\})$, which means it is necessarily chordal.  The graph in the middle is chordal since it has no cycles, but is not threshold; otherwise there would exist a set of weights and a threshold $t$ such that $\sum\limits_{v = 0}^3 \omega(v) \geq 2t$ (since $03$ and $12$ are edges) and $\sum\limits_{v=0}^3 \omega(v) < 2t$ (since $02$ and $13$ are non-edges), which is impossible. The graph on the right is not chordal since it contains an induced cycle of length $4$, which means it also cannot be threshold.
\end{example}

\subsection{Betti Numbers}

The second central class of objects we consider in this paper, Betti numbers of finitely-generated graded modules over polynomial rings, are fundamental invariants in commutative algebra. For a comprehensive introduction to the topic, we recommend the textbooks of Eisenbud \cite{Eisenbud} and Miller and Sturmfels \cite{MS}.

Let $\Bbbk$ be a field, let $S = \Bbbk\left[x_1,\ldots,x_n\right]$, let $M$ be a finitely-generated graded $S$-module, and let $M(d)$ denote the \emph{twisting} of $M$ by $d$, that is, let $M(d)$ be the module whose $i$-th graded piece is $M_{i+d}$.

\begin{definition}
A  graded \emph{free resolution} of $M$ is an exact complex of the form $$\mathcal{F}_{\bullet}:0\longleftarrow M\overset{\phi_0}{\longleftarrow} F_{0}\overset{\phi_{1}}{\longleftarrow}F_{1}\longleftarrow\cdots\longleftarrow F_{\ell-1}\overset{\phi_{\ell}}{\longleftarrow}F_{\ell}\longleftarrow0,$$ where each $F_i$ is a graded free $S$-module and each homomorphism $\phi_i$ is degree-preserving. \end{definition}

The \emph{length} of $\mathcal{F}_{\bullet}$ is the largest value $\ell$ such that $F_{\ell} \neq 0$, and by the Hilbert Syzygy Theorem, every $S$-module has a free resolution of length at most $n$ (see Miller and Sturmfels' book \cite{MS} for more details).  A free resolution of $M$ is \emph{minimal} if each $F_{i}$ has the minimal number of generators.  It is known that every finitely-generated graded $S$-module has a minimal free resolution that is unique up to isomorphism (see \cite{Eisenbud}). The \emph{projective dimension} of a module is the length of its minimal resolution. 

\begin{definition}
Let $\mathcal{F}_{\bullet}$ be a minimal free resolution of $M$ with $$F_{i}=\bigoplus_{j\in\mathbb{Z}}S\left(-j\right)^{\beta_{i,j}}$$ where $S\left(-j\right)$ is the twisting of $S$ by $-j$. That is, let $\mathcal{F}_{\bullet}$ be of the form
$$0\longleftarrow M\longleftarrow\bigoplus_{j\in\mathbb{Z}}S\left(-j\right)^{\beta_{0,j}}\overset{\phi_{1}}{\longleftarrow}\cdots\overset{\phi_{\ell}}{\longleftarrow}\bigoplus_{j\in\mathbb{Z}}S\left(-j\right)^{\beta_{\ell,j}}\longleftarrow0.$$  The exponent $\beta_{i,j}$ is called the \emph{$i^{\mathrm{th}}$ graded Betti number of degree $j$} for $M$.
 \end{definition}

\begin{example}\label{ex:linear}
Let $S=\Bbbk\left[x_{1},x_{2},x_{3}\right]$ and $I=\left<x_{1}x_{2},x_{1}x_{3},x_{2}x_{3}\right>\subseteq S$. A minimal graded free resolution of $S/I$ is
$$0\longleftarrow S/I\overset{\pi}{\longleftarrow}S\left(0\right)^{1}\xleftarrow{{\scriptscriptstyle {\left[\begin{smallmatrix}x_{1}x_{2} & x_{1}x_{3} & x_{2}x_{3}\end{smallmatrix}\right]}}}S\left(-2\right)^{3}\xleftarrow{{\scriptscriptstyle {\left[\begin{smallmatrix}-x_{3} & 0\\
x_{2} & -x_{2}\\
0 & x_{1}
\end{smallmatrix}\right]}}}S\left(-3\right)^{2}\longleftarrow0$$
and its nonzero Betti numbers are $\beta_{0,0} = 1$, $\beta_{1,2} = 3$, and $\beta_{2,3}=2$.
\end{example}

\begin{example}\label{ex:nonlinear}
Let $S=\Bbbk\left[x_{1},x_{2}\right]$ and $I=\left<x_{1}^{2},x_{1}x_{2},x_{2}^{3}\right>\subseteq S$. A minimal free resolution of $S/I$ is
$$0\longleftarrow S/I\overset{\pi}{\longleftarrow}S\left(0\right)^{1}\xleftarrow{{\scriptscriptstyle {\left[\begin{smallmatrix}x_{1}^{2} & x_{1}x_{2} & x_{2}^{3}\end{smallmatrix}\right]}}}S\left(-2\right)^{2}\oplus S\left(-3\right)^{1}\xleftarrow{{\scriptscriptstyle {\left[\begin{smallmatrix}x_{2} & 0\\
-x_{1} & x_{2}^{2}\\
0 & -x_{1}
\end{smallmatrix}\right]}}}S\left(-3\right)^{1}\oplus S\left(-4\right)^{1}\leftarrow0$$
and its nonzero Betti numbers are $\beta_{0,0}=1$, $\beta_{1,2}=2$, $\beta_{1,3}=1$, $\beta_{2,3}=1,$ and $\beta_{2,4}=1$.
\end{example}

An ideal $I$ in $S$ is \emph{$d$-linear} if $\beta_{i,j}\left(S/I\right)=0$ whenever $j-i\neq d-1$ except for the case where $i = j = 0$ since $\beta_{0,0}(S/I)$ is necessarily equal to $1$.  The ideal in Example~\ref{ex:linear} is $2$-linear whereas the ideal in Example~\ref{ex:nonlinear} is not.  From here on, we shall restrict our attention to Betti numbers of quotients by $2$-linear ideals.  Since there is at most one nonzero $\beta_{i,j}$ for each $0 \leq i \leq n$ in a $2$-linear ideal and $\beta_{0,1} = 0$ for every quotient $S/I$, we shall simplify our notation for the Betti numbers to  $$\beta(S/I) = (\beta_1,\ldots,\beta_n) = (\beta_{1,2}, \ldots, \beta_{n,n+1})$$ which we will call the \emph{Betti sequence} of $S/I$.

\subsection{Anti-Lecture Hall Compositions}

The third and final central class of objects we consider in this paper are sequences of numbers called anti-lecture hall compositions. 

\begin{definition}
An \emph{anti-lecture hall composition} of length $n$ bounded above by $t$ is an integer sequence $\lambda=\left(\lambda_{1},\dots,\lambda_{n}\right)$ that satisfies $$t\geq\frac{\lambda_{1}}{1}\geq\frac{\lambda_{2}}{2}\geq\cdots\geq\frac{\lambda_{n}}{n}\geq0.$$
\end{definition}

\begin{example}
The sequence $(1,2,2,1,0)$ is an anti-lecture hall composition of length $5$ bounded above by $1$ since $$1 \geq \frac{1}{1} \geq \frac{2}{2} \geq \frac{2}{3} \geq \frac{1}{4} \geq \frac{0}{5} \geq 0.$$
\end{example}

Anti-lecture hall compositions arose naturally from the study of the more intuitively named \emph{lecture hall partitions} that were introduced by Bousquet-M\'elou and Eriksson in \cite{BE1} and \cite{BE2}.  These sequences have become fundamental objects in algebraic and enumerative combinatorics (see Savage's paper \cite{S16} for a comprehensive survey).  We do not require a substantial portion of the theory of anti-lecture hall compositions in this paper, but we will make use of the following result of Corteel, Lee, and Savage in \cite{CLS}.

\begin{theorem}[Corollary 4, \cite{CLS}]
There are $(t+1)^n$ anti-lecture hall compositions of length $n$ bounded above by $t$.  
\end{theorem}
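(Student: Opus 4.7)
My plan is to prove the identity by induction on $n$ combined with a telescoping sum. Let $A_n(t)$ denote the asserted count. Partitioning the compositions by the value of $\lambda_1 \in \{0,1,\ldots,t\}$ yields $A_n(t) = \sum_{k=0}^{t} C(n,k)$, where $C(n,k)$ denotes the number of integer sequences $(\lambda_2,\ldots,\lambda_n)$ satisfying $k \geq \lambda_2/2 \geq \cdots \geq \lambda_n/n \geq 0$. The crux of the proof is then the lemma $C(n,k) = (k+1)^n - k^n$, from which the main identity follows by telescoping:
$$A_n(t) = \sum_{k=0}^{t} \bigl[(k+1)^n - k^n\bigr] = (t+1)^n.$$

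To prove the lemma, I would introduce the refined auxiliary count $F(a, n, r)$ for integer sequences $(\lambda_a, \ldots, \lambda_{a+n-1})$ satisfying $r \geq \lambda_a/a \geq \cdots \geq \lambda_{a+n-1}/(a+n-1) \geq 0$, where $r$ is allowed to be a non-negative rational. Then $C(n, k) = F(2, n-1, k)$, and $F$ satisfies the natural recursion
$$F(a, n, r) = \sum_{\lambda_a = 0}^{\lfloor ar \rfloor} F(a+1, n-1, \lambda_a/a).$$
An induction on $n$ (with $a$ allowed to vary, and a closed form tracked along rationals with controlled denominators) would aim to produce a sufficiently flexible formula for $F$ that specializes to the lemma at $a = 2$ and integer $r = k$. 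Small cases can be checked directly: $F(2,1,k) = 2k+1 = (k+1)^2 - k^2$ and $F(2,2,k) = 3k^2+3k+1 = (k+1)^3 - k^3$, lending confidence that the inductive structure is correct.

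The principal obstacle is that iterating the recursion produces rational intermediate bounds whose denominators accumulate products of consecutive integers, so the closed form for $F$ must accommodate these ratios along with the floor operations they trigger; verifying that the induction actually telescopes to the desired expression at $a = 2$ will require careful bookkeeping. A potentially cleaner route, avoiding these technicalities, is to construct an explicit bijection between anti-lecture hall compositions of length $n$ bounded above by $t$ and the cube $\{0,1,\ldots,t\}^n$ in the spirit of the arguments of Corteel, Lee, and Savage in \cite{CLS}, by encoding each composition $\lambda$ via appropriately normalized slack parameters at each level; this trades combinatorial ingenuity for arithmetic bookkeeping, and may be preferable depending on the intended exposition.
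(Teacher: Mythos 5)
There is a genuine gap here. Your reduction step --- partitioning by $\lambda_1=k$ and telescoping --- does not actually advance the problem, because the ``crux'' lemma $C(n,k)=(k+1)^n-k^n$ is equivalent to the theorem itself: the compositions with $\lambda_1=k$ are exactly those counted by $A_n(k)\setminus A_n(k-1)$ (an anti-lecture hall composition bounded by $k$ either has $\lambda_1=k$ or is bounded by $k-1$), so $C(n,k)=A_n(k)-A_n(k-1)$ and the lemma is just the first-difference restatement of $A_n(t)=(t+1)^n$. All the content of the theorem therefore sits in the lemma, and the lemma is not proved: the auxiliary count $F(a,n,r)$ and its recursion $F(a,n,r)=\sum_{\lambda_a=0}^{\lfloor ar\rfloor}F(a+1,n-1,\lambda_a/a)$ are set up correctly, but you never produce the closed form for $F$ at rational $r$ that the induction would require, and you yourself flag that the accumulating denominators and floor functions are an unresolved obstacle. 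Checking $F(2,1,k)$ and $F(2,2,k)$ is evidence, not proof; as written, the argument is a plan whose hardest step is missing. The fallback suggestion of a bijection onto $\{0,1,\ldots,t\}^n$ via ``normalized slack parameters'' is likewise unspecified --- exhibiting such a bijection and proving it is well defined and invertible is precisely the nontrivial work.

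For comparison: the paper does not prove this statement at all; it quotes it as Corollary 4 of Corteel, Lee, and Savage, where the count follows from their enumeration of anti-lecture hall compositions (a generating-function/bijective argument). So there is no in-paper proof for your approach to match, but to stand on its own your write-up would need either a genuine induction carrying a closed form for $F(a,n,r)$ through the rational bounds, or an explicit, verified bijection --- neither of which is present.
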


In particular, there are $2^n$ anti-lecture hall compositions of length $n$ bounded above by $1$.  As we alluded to in the introduction, this set has the same cardinality as the set of threshold graphs on $n+1$ vertices.  We will discuss a specific bijection between these two sets in the next section.

\subsection{The Correspondence}

Let $\mathcal{T}_n$, $\mathcal{B}_n$, and $\mathcal{A}_n$ denote the sets of threshold graphs on $n+1$ vertices, Betti sequences of quotients of $\Bbbk[x_0,\ldots,x_n]$ by $2$-linear ideals, and anti-lecture hall compositions of length $n$ bounded above by $1$, respectively.  Engstr\"om and Stamps discovered a one-to-one-to-one correspondence between these three sets in \cite{ES13}.

\begin{theorem}[Proposition 4.11, \cite{ES13}]\label{thm:correspondence}
The sets $\mathcal{T}_n$, $\mathcal{B}_n$, and $\mathcal{A}_n$ are in one-to-one-to-one correspondence for every $n \in \mathbb{N}$.
\end{theorem}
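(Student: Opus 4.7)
The plan is to reduce the three-way correspondence to a cardinality check plus one substantive bijection. From earlier in Section~2 we already have $|\mathcal{T}_n| = 2^n$ (the enumeration of threshold graphs by their isolated-vertex subsets) and $|\mathcal{A}_n| = 2^n$ (the Corteel--Lee--Savage theorem with $t = 1$). It therefore suffices to exhibit a bijection $\Phi : \mathcal{T}_n \to \mathcal{B}_n$; combined with any bijection $\mathcal{T}_n \to \mathcal{A}_n$, this will yield the claimed one-to-one-to-one correspondence.

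To construct $\Phi$, I would send a threshold graph $T$ on vertex set $\{0, 1, \ldots, n\}$ to the Betti sequence of $S/I_T$, where $I_T \subseteq S = \Bbbk[x_0, \ldots, x_n]$ is the squarefree monomial ideal generated by $x_i x_j$ for each non-edge $ij$ of $T$ (that is, the edge ideal of the complement graph $T^c$). This map is well-defined because the class of threshold graphs is closed under complementation, so $T^c$ is in particular chordal, and by Fr\"oberg's theorem the edge ideal of a co-chordal graph has a $2$-linear resolution; hence $\beta(S/I_T) \in \mathcal{B}_n$.

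The main content is to show that $\Phi$ is a bijection. For surjectivity, given an arbitrary $2$-linear ideal $I \subseteq S$, I would polarize to a squarefree $2$-linear ideal and invoke Fr\"oberg again to write it as $I_H$ for some chordal graph $H$; Hochster's formula then expresses $\beta_{i,i+1}(S/I_H)$ in terms of induced-subgraph data of $H$, and the remaining task is to produce a threshold graph $T$ realizing this same numerical data by iteratively appending isolated or dominating vertices as permitted by Proposition~\ref{prop:char}. For injectivity, I would induct on $n$: the effect on $\beta(S/I_T)$ of appending an isolated versus a dominating vertex at position $n$ is combinatorially distinguishable, so the label of vertex $n$ can be read off from the Betti sequence, and the inductive hypothesis applies to the induced threshold graph on $\{0, \ldots, n-1\}$.

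The step I expect to be the main obstacle is surjectivity: reconstructing a threshold graph from an arbitrary $2$-linear numerical Betti sequence. This is exactly the ``engineering'' problem that the explicit combinatorial mappings promised in Section~3 are designed to solve, and in practice the cleanest route to surjectivity is to write down an explicit inverse of $\Phi$ rather than to argue abstractly from Hochster-formula counts over all chordal graphs with the prescribed invariants.
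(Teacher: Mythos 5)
Your reduction to cardinalities is fine as far as it goes: since $|\mathcal{T}_n| = |\mathcal{A}_n| = 2^n$, the theorem as stated comes down to showing $|\mathcal{B}_n| = 2^n$, i.e.\ that $T \mapsto \beta(\Bbbk[T])$ is both injective and surjective onto the set of \emph{all} $2$-linear Betti sequences. Your injectivity sketch (read off from the last entry of the Betti sequence whether vertex $n$ is dominating or isolated, then induct) is sound and is exactly what Lemma~\ref{lem:betti-shift} provides. The genuine gap is surjectivity, which you yourself flag as ``the main obstacle'' and never close. Moreover, the route you sketch does not work as stated: an arbitrary $2$-linear ideal need not be a monomial ideal, so you cannot simply polarize it and invoke Fr\"oberg to write it as the edge ideal $I_H$ of a chordal graph; you would first need a generic-initial-ideal or lex-ideal argument that preserves the graded Betti numbers, and polarization changes the number of variables, which matters because $\mathcal{B}_n$ is defined over the fixed ring $\Bbbk[x_0,\ldots,x_n]$. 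Even granting a reduction to chordal $H$, producing a threshold graph matching the Hochster-formula data is precisely the nontrivial combinatorial content, not a routine finishing step.

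The paper avoids proving surjectivity directly. It uses two injections: the coedge-ideal map $\mathcal{T}_n \to \mathcal{B}_n$ (injective by \cite{ES13}) and the map $\mathcal{B}_n \to \mathcal{A}_n$ coming from an invertible linear transformation of the Boij--S\"oderberg coefficients (hence injective). The composition $\mathcal{T}_n \to \mathcal{B}_n \to \mathcal{A}_n$ is then an injection between finite sets of equal cardinality $2^n$, hence a bijection, and this forces both maps to be bijections---in particular $\mathcal{T}_n \to \mathcal{B}_n$ is surjective without anyone ever reconstructing a threshold graph from a prescribed Betti sequence at this stage (the explicit inverses come only later, in Section~3). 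If you want to salvage your direct approach, the honest fix is to establish the upper bound $|\mathcal{B}_n| \leq 2^n$ by an independent argument, and the Boij--S\"oderberg injection of $\mathcal{B}_n$ into $\mathcal{A}_n$ is exactly such an argument.
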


The correspondence between $\mathcal{T}_n$ and $\mathcal{B}_n$ follows from a standard construction in combinatorial commutative algebra:  To every graph $G$ with vertex set $V(G) = \{0,1,\ldots,n\}$, we define the \emph{coedge ideal} of $G$ in $R = \Bbbk[x_0,x_1, \ldots, x_n]$ by $$I_c(G) = \langle x_ux_v \ | \ uv \notin E(G)\rangle.$$  Fr\"oberg \cite{F90} proved that the quotient ring $\Bbbk[G] = R/I_{c}\left(G\right)$ is $2$-linear if and only if $G$ is chordal.  Thus, since threshold graphs are chordal, $G \mapsto \beta(\Bbbk[G])$ gives a natural mapping from $\mathcal{T}_n$ to $\mathcal{B}_n$.  Engstr\"om and Stamps gave an explicit proof that this mapping is injective in \cite{ES13}, though it appears that experts have been aware of this fact for some time.  

The correspondence between $\mathcal{B}_n$ and $\mathcal{A}_n$ follows from the Boij-S\"oderberg theorems, which were conjectured by Boij and S\"oderberg in \cite{BS1}, proven by Eisenbud and Schreyer in \cite{ES09} and extended by Boij and S\"oderberg in \cite{BS2}.  The theorems state that the Betti diagrams of a finitely-generated graded $S$-module can be uniquely decomposed into a rational combination of a prescribed set of Betti diagrams (see Fl{\o}ystad's survey \cite{F12} for a comprehensive introduction).  The set $\mathcal{A}_n$ arises as the image of an invertible linear transformation on the Boij-S\"oderberg coefficients from $\mathcal{B}_n$ (this is shown explicitly in \cite{ES13}), thus yielding an injective mapping from $\mathcal{B}_n$ to $\mathcal{A}_n$.  

Theorem~\ref{thm:correspondence} is then a straightforward consequence of the following two facts: first, that $|\mathcal{T}_n| = |\mathcal{A}_n|$, and second, that the composition $\mathcal{T}_n \to \mathcal{B}_n \to \mathcal{A}_n$ is injective.  As an explicit example, the correspondence for $n = 3$ has been listed in Table~\ref{tab:corr}.

\begin{table}
\setlength{\tabcolsep}{12pt}
\renewcommand{\arraystretch}{1.25}
\begin{center}
\begin{tabular}{|m{1cm}|c|c|c|}
\hline 
\multicolumn{1}{|c}{$\mathcal{T}_3$} & \multicolumn{1}{|c}{$2^{[3]}$} & \multicolumn{1}{|c}{$\mathcal{B}_3$} & \multicolumn{1}{|c|}{$\mathcal{A}_3$} \\
\hline 
\hline 
\vspace{0pt}
\begin{tikzpicture}[scale=0.8]
\draw [fill] (0,1) circle [radius=0.08];
\draw [fill] (1,1) circle [radius=0.08];
\draw [fill] (1,0) circle [radius=0.08];
\draw [fill] (0,0) circle [radius=0.08];
\draw [semithick] (1,1) -- (0,1);
\draw [semithick] (1,0) -- (0,1);
\draw [semithick] (1,0) -- (1,1);
\draw [semithick] (0,0) -- (0,1);
\draw [semithick] (0,0) -- (1,0);
\draw [semithick] (0,0) -- (1,1);
\draw (0,1.5);
\end{tikzpicture} & $\left\{ \right\} $ & $\left(0,0,0\right)$ & $\left(0,0,0\right)$ \bigstrut \\
\hline 
\begin{tikzpicture}[scale=0.8]
\draw [fill] (0,1) circle [radius=0.08];
\draw [fill] (1,1) circle [radius=0.08];
\draw [fill] (1,0) circle [radius=0.08];
\draw [fill] (0,0) circle [radius=0.08];
\draw [semithick] (1,0) -- (0,1);
\draw [semithick] (1,0) -- (1,1);
\draw [semithick] (0,0) -- (0,1);
\draw [semithick] (0,0) -- (1,0);
\draw [semithick] (0,0) -- (1,1);
\draw (0,1.5);
\end{tikzpicture} & $\left\{ 1\right\} $ & $\left(1,0,0\right)$ & $\left(1,0,0\right)$ \bigstrut \\
\hline 
\begin{tikzpicture}[scale=0.8]
\draw [fill] (0,1) circle [radius=0.08];
\draw [fill] (1,1) circle [radius=0.08];
\draw [fill] (1,0) circle [radius=0.08];
\draw [fill] (0,0) circle [radius=0.08];
\draw [semithick] (1,1) -- (0,1);
\draw [semithick] (0,0) -- (0,1);
\draw [semithick] (0,0) -- (1,0);
\draw [semithick] (0,0) -- (1,1);
\draw (0,1.5);
\end{tikzpicture} & $\left\{ 2\right\} $ & $\left(2,1,0\right)$ & $\left(1,1,0\right)$ \bigstrut \\
\hline 
\begin{tikzpicture}[scale=0.8]
\draw [fill] (0,1) circle [radius=0.08];
\draw [fill] (1,1) circle [radius=0.08];
\draw [fill] (1,0) circle [radius=0.08];
\draw [fill] (0,0) circle [radius=0.08];
\draw [semithick] (1,1) -- (0,1);
\draw [semithick] (1,0) -- (0,1);
\draw [semithick] (1,0) -- (1,1);
\draw (0,1.5);
\end{tikzpicture} & $\left\{ 3\right\} $ & $\left(3,3,1\right)$ & $\left(1,1,1\right)$ \bigstrut \\
\hline 
\begin{tikzpicture}[scale=0.8]
\draw [fill] (0,1) circle [radius=0.08];
\draw [fill] (1,1) circle [radius=0.08];
\draw [fill] (1,0) circle [radius=0.08];
\draw [fill] (0,0) circle [radius=0.08];
\draw [semithick] (0,0) -- (0,1);
\draw [semithick] (0,0) -- (1,0);
\draw [semithick] (0,0) -- (1,1);
\draw (0,1.5);
\end{tikzpicture} & $\left\{ 1,2\right\} $ & $\left(3,2,0\right)$ & $\left(1,2,0\right)$ \bigstrut \\
\hline 
\begin{tikzpicture}[scale=0.8]
\draw [fill] (0,1) circle [radius=0.08];
\draw [fill] (1,1) circle [radius=0.08];
\draw [fill] (1,0) circle [radius=0.08];
\draw [fill] (0,0) circle [radius=0.08];
\draw [semithick] (1,0) -- (0,1);
\draw [semithick] (1,0) -- (1,1);
\draw (0,1.5);
\end{tikzpicture} & $\text{\ensuremath{\left\{  1,3\right\} } }$ & $\left(4,4,1\right)$ & $\left(1,2,1\right)$ \bigstrut \\
\hline 
\begin{tikzpicture}[scale=0.8]
\draw [fill] (0,1) circle [radius=0.08];
\draw [fill] (1,1) circle [radius=0.08];
\draw [fill] (1,0) circle [radius=0.08];
\draw [fill] (0,0) circle [radius=0.08];
\draw [semithick] (1,1) -- (0,1);
\draw (0,1.5);
\end{tikzpicture} & $\left\{ 2,3\right\} $ & $\left(5,6,2\right)$ & $\left(1,2,2\right)$ \bigstrut \\
\hline 
\begin{tikzpicture}[scale=0.8]
\draw [fill] (0,1) circle [radius=0.08];
\draw [fill] (1,1) circle [radius=0.08];
\draw [fill] (1,0) circle [radius=0.08];
\draw [fill] (0,0) circle [radius=0.08];
\draw (0,1.5);
\end{tikzpicture} & $\left\{ 1,2,3\right\} $ & $\left(6,8,3\right)$ & $\left(1,2,3\right)$ \bigstrut \\
\hline 
\end{tabular}
\medskip
\caption{The correspondence from Theorem~\ref{thm:correspondence} between $\mathcal{T}_3$, $\mathcal{B}_3$, and $\mathcal{A}_3$, where the second column contains the subset of isolated vertices for each graph.}\label{tab:corr}
\end{center}
\end{table}

Throughout the rest of this paper, we will use $\beta(T)$ and $\lambda(T)$ to denote the unique Betti sequence and anti-lecture hall composition associated to each threshold graph $T$ by the correspondence in Theorem~\ref{thm:correspondence}.  Similarly, we will use $T(\beta)$ and $\lambda(\beta)$ to denote the unique threshold graph and anti-lecture hall composition associated to each Betti sequence and $T(\lambda)$ and $\beta(\lambda)$ to denote the unqiue threshold graph and Betti sequence associated to each anti-lecture hall composition.

\section{The Combinatorial Mappings}\label{sec:mappings}

This is the first of two main sections of this paper.  Here, we elaborate on Theorem~\ref{thm:correspondence} by presenting several new explicit combinatorial mappings for the bijections involved.  We begin by reviewing the already established bijection between Betti sequences and anti-lecture hall compositions and establishing some new notation.  We then proceed to introduce the new explicit combinatorial mappings between threshold graphs and Betti sequences and between threshold graphs and anti-lecture hall compositions. 

\subsection{Review and Setup}

The following theorem is proven implicitly in \cite{ES13} where it is described using matrices.  We write it out in explicit combinatorial form below:

\begin{theorem}[Proposition 4.11, \cite{ES13}]\label{thm:transform} If $\beta = (\beta_1,\beta_2,\ldots,\beta_n)$ is the Betti sequence of a quotient by a 2-linear ideal and $\lambda = (\lambda_1,\lambda_2,\ldots,\lambda_n)$ is an anti-lecture hall composition such that $\beta(\lambda) = \beta$ (or equivalently $\lambda(\beta) = \lambda$), then $$ \beta_i = \sum\limits_{k = i}^n \binom{k-1}{i-1} \lambda_k \qquad \text{ and } \qquad \lambda_i = \sum\limits_{k = i}^n (-1)^{i+k} \binom{k-1}{i-1} \beta_k.$$
\end{theorem}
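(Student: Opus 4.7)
The plan is to observe that the two displayed formulas are binomial inverses of one another, so only one direction requires proof; the other then follows by a standard linear-algebraic inversion.

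\textbf{Step 1 (inversion).} I would encode the forward direction as a matrix identity. Let $M$ be the $n\times n$ upper-triangular matrix with entries $M_{ik}=\binom{k-1}{i-1}$ for $1\le i \le k \le n$, so that the first formula reads $\beta=M\lambda$. Since $M_{ii}=1$, the matrix $M$ is invertible over $\mathbb{Z}$, and I would prove that its inverse is the matrix $N$ with entries $N_{ik}=(-1)^{i+k}\binom{k-1}{i-1}$. This reduces to verifying the orthogonality
\[
\sum_{k=i}^{j}(-1)^{i+k}\binom{k-1}{i-1}\binom{j-1}{k-1}=\delta_{ij},
\]
which, after the substitution $k=i+r$ and the trinomial revision $\binom{j-1}{k-1}\binom{k-1}{i-1}=\binom{j-1}{i-1}\binom{j-i}{r}$, collapses to $\sum_{r=0}^{j-i}(-1)^r\binom{j-i}{r}$, vanishing unless $j=i$. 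Granting this classical identity, the two formulas in the theorem are equivalent.

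\textbf{Step 2 (forward direction).} I would then prove the first identity via the Boij--S\"oderberg machinery referenced in Section~2. Any Betti diagram of a quotient by a $2$-linear ideal is supported only on the line $j=i+1$ (together with the entry at $(0,0)$), so its pure-diagram decomposition involves only the pure diagrams $\pi^{(k)}$ with degree sequence $(0,2,3,\dots,k+1)$ for $k=1,\dots,n$. The Herzog--K\"uhl equations determine $\pi^{(k)}$ up to scalar, and under the normalization implicit in \cite{ES13} one finds
\[
\pi^{(k)}_i=\binom{k-1}{i-1}\qquad\text{for }1\le i\le k.
\]
By the definition of the correspondence in \cite{ES13}, the anti-lecture hall composition $\lambda=(\lambda_1,\dots,\lambda_n)$ records precisely the coefficients in the unique decomposition $\beta=\sum_{k=1}^n\lambda_k\pi^{(k)}$. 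Reading off the $i$-th component then yields
\[
\beta_i=\sum_{k=i}^n\binom{k-1}{i-1}\lambda_k,
\]
and Step~1 converts this to the signed inverse formula for $\lambda_i$.

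\textbf{Main obstacle and an alternative.} The principal hurdle is bookkeeping: one must check that the Boij--S\"oderberg pure diagrams in the $2$-linear stratum really take the closed form $\binom{k-1}{i-1}$ under the same normalization that \cite{ES13} uses to define $\lambda$, rather than under a rescaled normalization that would introduce spurious factors. Once those conventions are aligned, both formulas fall out immediately. A self-contained alternative route, which I would keep in reserve, is to induct on $n$ by comparing the Betti sequences and ALH compositions attached to the threshold graphs $T(n,\sigma)$ and $T(n-1,\sigma\cap[n-1])$: adding or removing the top vertex (dominating or isolated) perturbs $\beta$ and $\lambda$ in a controlled way, and the inductive step reduces to the Pascal identity $\binom{k-1}{i-1}=\binom{k-2}{i-1}+\binom{k-2}{i-2}$, bypassing Boij--S\"oderberg entirely.
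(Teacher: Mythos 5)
Your Step 1 is fine: the inversion identity $\sum_{k=i}^{j}(-1)^{i+k}\binom{k-1}{i-1}\binom{j-1}{k-1}=\delta_{ij}$ is correct and standard, so the two displayed formulas are indeed equivalent. (For context, the paper itself offers no proof of this theorem; it is taken from Proposition 4.11 of \cite{ES13}, where it is encoded in matrix form.) The genuine gap is in Step 2, and it is not mere bookkeeping. First, the Herzog--K\"uhl equations for the degree sequence $(0,2,3,\dots,k+1)$ give $\pi^{(k)}_i$ proportional to $\tfrac{1}{(i+1)\,(i-1)!\,(k-i)!}$, which is \emph{not} proportional to $\binom{k-1}{i-1}$ for any scalar normalization: for $k=3$ the pure diagram normalized by $\beta_0=1$ is $(1;\,6,8,3)$, whereas your claimed column would be $(1,2,1)$, and $6:8:3\neq 1:2:1$. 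Second, $\lambda$ is not the vector of Boij--S\"oderberg coefficients under any rescaling of the pure diagrams: the quotient by the full squarefree quadratic ideal (the empty threshold graph $T(3,\{1,2,3\})$ in Table~\ref{tab:corr}) has the \emph{pure} Betti table $(1;6,8,3)$, so its Boij--S\"oderberg coefficient vector has a single nonzero entry, yet its anti-lecture hall composition is $(1,2,3)$. This is consistent with the paper's own description that $\mathcal{A}_n$ is the image of the Boij--S\"oderberg coefficients under a nontrivial invertible linear transformation, not the coefficients themselves. So the identity $\beta_i=\sum_k\binom{k-1}{i-1}\lambda_k$ cannot be "read off" from the pure-diagram decomposition in the way you propose; your main conclusion is true, but the derivation as written proves a false intermediate statement.

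Your reserve alternative (induction on $n$ via appending a dominating or isolated vertex, using Pascal's rule) is much closer to a workable argument --- it mirrors the recursions the paper itself exploits in Lemmas~\ref{lem:betti-shift} and~\ref{lem:alhc-shift} and in Theorems~\ref{thm:threshold-to-alhc} and~\ref{thm:Betti-direct}. But beware of circularity: you need an independent description of which anti-lecture hall composition "corresponds" to a given Betti sequence. In this paper that correspondence is defined through the matrix description in \cite{ES13}, and the recursion $\lambda(T_\iota)=(1,\lambda_1+1,\dots,\lambda_n+1)$, $\lambda(T_\delta)=(\lambda,0)$ is \emph{deduced from} Theorem~\ref{thm:transform} (that is exactly the proof of Lemma~\ref{lem:alhc-shift}). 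So to make the inductive route honest you must either unwind the actual matrix transformation of the Boij--S\"oderberg coefficients used in \cite{ES13} and show it satisfies the same recursion, or show directly that the recursion together with the base case characterizes the ES13 correspondence; as sketched, that verification is exactly the step that is missing.
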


Our aim in Sections~\ref{TG:ALHC} and \ref{TG:BS} is to establish explicit combinatorial mappings between threshold graphs and each of Betti sequences and anti-lecture hall compositions analogous to Theorem~\ref{thm:transform}.  To do this, we show how to calculate the Betti sequence and anti-lecture hall composition associated to a given threshold graph from a prescribed labeling of its non-edges.  For an arbitrary graph $G$, let $\overline{G}$ denote the \emph{complement} of $G$, that is, the graph with $V(\overline{G}) = V(G)$ such that $uv \in E(\overline{G})$ if and only if $uv \notin E(G)$.     

\begin{definition}
Let $T = T(n,\sigma)$ be a threshold graph and, for each $v \in V(T)$, let $$i_T(v) = |\{w \in \sigma \ | \ w > v\}|$$ denote the \emph{number of isolated vertices in $T$ greater than $v$}. The \emph{$\beta$-labeling} of the non-edges of $T$ is the function $\ell_\beta : E(\overline{T}) \to 2^{[n]}$ given by $$\ell_\beta(uv) = \{ u+1, u+2, \ldots, u+1 + i_{T}(v)\}$$ for each $uv \in E(\overline{T})$ with $u < v$ and the \emph{$\lambda$-labeling} of the non-edges of $T$ is the function $\ell_\lambda : E(\overline{T}) \to [n]$ given by $$\ell_{\lambda}(uv) = u + 1 + i_{T}(v)$$ for every $uv \in E(\overline{T})$ with $u < v$.
\end{definition}

\begin{example}\label{ex:graphs}
For the threshold graph, $T = T(5,\{1,2,4\})$, illustrated in Figure~\ref{fig:thresholdgraphexample},  $i_{T}(1) = 2$, $i_{T}(2)  = i_{T}(3)  = 1$, and $i_{T}(4)  = i_{T}(5)  = 0$, so the $\beta$- and $\lambda$-labelings of its non-edges are illustrated in Figure~\ref{fig:ex}. 
\end{example}

\begin{figure}
\begin{center}
\begin{tikzpicture}[scale=1]
\draw [fill] (0,3.6) circle [radius=0.1];
\node [above left] at (0,3.6) {$0$};
\draw [fill] (2.4,3.6) circle [radius=0.1];
\node [above right] at (2.4,3.6) {$1$};
\draw [fill] (3.9,1.8) circle [radius=0.1];
\node [right] at (3.9,1.8) {$ \, 2$};
\draw [fill] (2.4,0) circle [radius=0.1];
\node [below right] at (2.4,0) {$3$};
\draw [fill] (0,0) circle [radius=0.1];
\node [below left] at (0,0) {$4$};
\draw [fill] (-1.5,1.8) circle [radius=0.1];
\node [left] at (-1.5,1.8) {$5 \, $};
\draw [dashed] (0,3.6) -- (2.4,3.6);
\draw [dashed] (0,3.6) -- (3.9,1.8);
\draw [dashed] (0,3.6) -- (0,0);
\draw [dashed] (2.4,3.6) -- (3.9,1.8);
\draw [dashed] (2.4,3.6) -- (0,0);
\draw [dashed] (3.9,1.8) -- (0,0);
\draw [dashed] (2.4,0) -- (0,0);
\node [above] at (1.2,3.6) {\color{gray}$\{1,2,3\}$};
\node [below] at (2.3,2.3) {\color{gray}$\{1,2\}$};
\node [left] at (0,1.8) {\color{gray}$\{1\}$};
\node [above right] at (3.15,2.7) {\color{gray}$\{2,3\}$};
\node [above] at (0.5,1.2) {\color{gray}$\{2\}$};
\node [above] at (1.3,0.7) {\color{gray}$\{3\}$};
\node [above] at (1.9,0) {\color{gray}$\{4\}$};
\end{tikzpicture}
\qquad
\begin{tikzpicture}[scale = 1]
\draw [fill] (0,3.6) circle [radius=0.1];
\node [above left] at (0,3.6) {$0$};
\draw [fill] (2.4,3.6) circle [radius=0.1];
\node [above right] at (2.4,3.6) {$1$};
\draw [fill] (3.9,1.8) circle [radius=0.1];
\node [right] at (3.9,1.8) {$ \, 2$};
\draw [fill] (2.4,0) circle [radius=0.1];
\node [below right] at (2.4,0) {$3$};
\draw [fill] (0,0) circle [radius=0.1];
\node [below left] at (0,0) {$4$};
\draw [fill] (-1.5,1.8) circle [radius=0.1];
\node [left] at (-1.5,1.8) {$5 \, $};
\draw [dashed] (0,3.6) -- (2.4,3.6);
\draw [dashed] (0,3.6) -- (3.9,1.8);
\draw [dashed] (0,3.6) -- (0,0);
\draw [dashed] (2.4,3.6) -- (3.9,1.8);
\draw [dashed] (2.4,3.6) -- (0,0);
\draw [dashed] (3.9,1.8) -- (0,0);
\draw [dashed] (2.4,0) -- (0,0);
\node [above] at (1.2,3.6) {\color{gray}$3$};
\node [below] at (2.6,2.4) {\color{gray}$2$};
\node [left] at (0,1.8) {\color{gray}$1$};
\node [above right] at (3.15,2.7) {\color{gray}$3$};
\node [above left] at (0.8,1.2) {\color{gray}$2$};
\node [above left] at (1.5,0.7) {\color{gray}$3$};
\node [above] at (1.8,0) {\color{gray}$4$};
\end{tikzpicture}
\caption{The $\beta$-labeling (left) and $\lambda$-labeling (right) of the non-edges of the threshold graph $T(5,\{1,2,4\})$ illustrated in Figure~\ref{fig:thresholdgraphexample}.}\label{fig:ex}
\end{center}
 \end{figure}
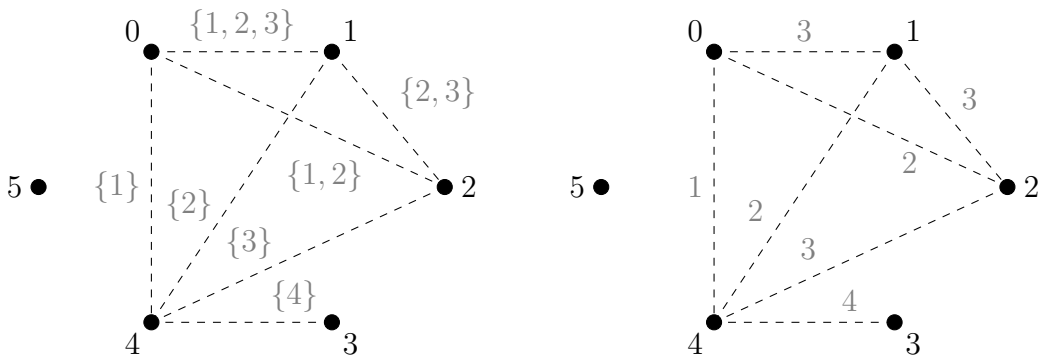 

We conclude this section by extending the recursive characterization of threshold graphs (in terms of dominating and isolating vertices) to Betti sequences and anti-lecture hall compositions.  

\begin{definition}
For a threshold graph $T = T(n,\sigma)$, let $T_\delta = T(n+1,\sigma)$ be the threshold graph obtained by appending a dominating vertex to $T$ and $T_\iota = T(n+1, \sigma \cup \{n+1\})$ be the threshold graph obtained by appending an isolated vertex to $T$.  
\end{definition}

Engstr\"om and Stamps showed how to express the Betti sequences of $T_\delta$ and $T_\iota$ in terms of the Betti sequence of $T$ in \cite{ES13}.

\begin{lemma}[Proposition 4.2, \cite{ES13}]\label{lem:betti-shift} If $T$ is a threshold graph on $n+1$ vertices with Betti sequence $\beta(T) = \left(\beta_{1},\dots,\beta_{n}\right)$, then
\begin{enumerate} \itemsep 10pt
\item $\beta(T_\delta) = \left(\beta_{1},\dots,\beta_{n},0\right)$ and
\item $\beta(T_\iota) = \left(\beta_{1},\dots,\beta_{n},0\right) + \left(0,\beta_{1},\dots,\beta_{n}\right) + \left(\binom{n+1}{1},\dots,\binom{n+1}{n+1} \right).$
\end{enumerate}
\end{lemma}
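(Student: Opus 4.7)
For part (1), I would exploit the structural fact that the new dominating vertex $n+1$ of $T_\delta$ is adjacent to every earlier vertex, so no new non-edges are created. Hence $I_c(T_\delta) = I_c(T) \cdot R'$ where $R' = R[x_{n+1}] = \Bbbk[x_0,\ldots,x_{n+1}]$. Tensoring the minimal graded $R$-free resolution of $R/I_c(T)$ with $R'$ over $R$ produces a graded $R'$-free resolution of $R'/I_c(T_\delta)$ with the same ranks and shifts; because $x_{n+1}$ appears in none of the differentials, this resolution remains minimal, giving $\beta_i(T_\delta) = \beta_i$ for $i \leq n$ and $\beta_{n+1}(T_\delta) = 0$.

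For part (2), the isolated vertex $n+1$ of $T_\iota$ contributes exactly the non-edges $\{0,n+1\},\ldots,\{n,n+1\}$, so $I_c(T_\iota) = I_c(T) + x_{n+1}\mathfrak{m}$ with $\mathfrak{m} := (x_0,\ldots,x_n)$. A direct generator check yields $(I_c(T_\iota) : x_{n+1}) = \mathfrak{m}$ and $(I_c(T_\iota), x_{n+1}) = (I_c(T), x_{n+1})$, producing the short exact sequence of graded $R'$-modules
$$0 \longrightarrow R'/\mathfrak{m}\,(-1) \xrightarrow{\cdot\, x_{n+1}} R'/I_c(T_\iota) \longrightarrow R'/(I_c(T) + (x_{n+1})) \longrightarrow 0. \qquad (\star)$$

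Next I would identify the relevant graded Betti numbers of the two outer terms. Since $R'/\mathfrak{m} \cong \Bbbk[x_{n+1}]$ is minimally resolved by the Koszul complex on $x_0,\ldots,x_n$, the twist gives $\beta^{R'}_{i,i+1}(R'/\mathfrak{m}(-1)) = \binom{n+1}{i}$ as the sole contribution. Because $x_{n+1}$ is a non-zero-divisor on $R'/I_c(T)$, the minimal $R'$-free resolution of the rightmost module is the Koszul-tensor of the $R'$-extension of the minimal $R$-resolution of $R/I_c(T)$ with the complex $0 \to R'(-1) \xrightarrow{x_{n+1}} R' \to 0$, which yields linear-strand Betti numbers $\beta^{R'}_{i,i+1} = \beta_i + \beta_{i-1}$ (with $\beta_0 := 0$), together with a single off-strand entry at $(1,1)$. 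Applying the graded long exact $\mathrm{Tor}^{R'}(-, \Bbbk)$-sequence to $(\star)$ and reading off degree $j = i+1$ for $i \geq 1$, the two flanking Tor groups vanish, collapsing the sequence to a short exact sequence of $\Bbbk$-vector spaces and delivering $\beta_i(T_\iota) = \binom{n+1}{i} + \beta_i + \beta_{i-1}$, which is precisely the componentwise sum in (2).

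The main obstacle is verifying the vanishing of those two flanking Tor groups---specifically $\mathrm{Tor}^{R'}_{i+1}(R'/(I_c(T) + (x_{n+1})), \Bbbk)_{i+1}$ on one side and $\mathrm{Tor}^{R'}_{i-1}(R'/\mathfrak{m}(-1), \Bbbk)_{i+1}$ on the other---which is what confines the relevant piece of the long exact sequence to the linear strand. This rests on the precise placement of the Betti numbers of the two outer modules on prescribed diagonals, together with the $2$-linearity of $R'/I_c(T_\iota)$ guaranteed by Fröberg's theorem applied to the chordal graph $T_\iota$, ensuring that $\beta_i(T_\iota)$ really captures all the nonzero data. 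Once this flanking vanishing is in hand, the rest is a routine diagram chase.
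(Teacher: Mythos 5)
Your proposal is correct, but it does not follow the paper's route: in this paper the lemma is simply quoted from Engstr\"om and Stamps \cite{ES13} without proof, and the argument most natural to the surrounding toolkit is the combinatorial one via Theorem~\ref{thm:DE} --- any subset $W$ containing the new dominating vertex induces a connected subgraph and contributes nothing, giving (1), while a subset $W = W' \cup \{n+1\}$ containing the new isolated vertex has exactly one more component than $T[W']$, so such subsets contribute $\beta_{k-1}(T) + \binom{n+1}{k}$, giving (2). Your route is instead purely homological, and each step checks out: the flat extension plus minimality argument for (1); the colon computation $(I_c(T_\iota):x_{n+1}) = \mathfrak{m}$ and the resulting short exact sequence $(\star)$; the Koszul resolution of $R'/\mathfrak{m}(-1)$ accounting for the vector $\bigl(\binom{n+1}{1},\dots,\binom{n+1}{n+1}\bigr)$; the nonzerodivisor/tensor argument giving the strand $\beta_i + \beta_{i-1}$ for $R'/(I_c(T)+(x_{n+1}))$; and the flanking Tor vanishing, which, as you note, needs only the diagonal placement of the Betti numbers of the two outer modules (Fr\"oberg's theorem enters only to guarantee that the linear strand of $R'/I_c(T_\iota)$ is all there is, so the conclusion can be phrased as an identity of Betti sequences in the paper's convention --- alternatively, running the same long exact sequence in the off-strand degrees would give this directly). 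What your approach buys is a self-contained proof that does not rely on the Dochtermann--Engstr\"om component-counting formula and that explains the binomial term conceptually as the Koszul resolution of the residue field; what the combinatorial proof buys is brevity, since Theorem~\ref{thm:DE} is already on hand and the two recursions fall out of it in a few lines.
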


There is an analogous result to Lemma~\ref{lem:betti-shift} for anti-lecture hall compositions.  

\begin{lemma}
\label{lem:alhc-shift} If $T$ is a threshold graph on $n+1$ vertices with anti-lecture hall composition $\lambda(T) =\left(\lambda_{1},\dots,\lambda_{n}\right)$, then
\begin{enumerate} \itemsep 10pt
\item $\lambda(T_\delta) = \left(\lambda_{1},\dots,\lambda_{n},0\right)$ and  
\item $\lambda(T_\iota) = \left(1,\lambda_{1}+1,\dots,\lambda_{n}+1\right)$.
\end{enumerate}
\end{lemma}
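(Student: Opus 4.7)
The plan is to use the explicit formulas from Theorem~\ref{thm:transform} together with the Betti-sequence recursions in Lemma~\ref{lem:betti-shift}. Since the pair of formulas
$$\beta_i \;=\; \sum_{k=i}^{N}\binom{k-1}{i-1}\lambda_k \qquad \text{and} \qquad \lambda_i \;=\; \sum_{k=i}^{N}(-1)^{i+k}\binom{k-1}{i-1}\beta_k$$
set up a bijection between length-$N$ Betti sequences and anti-lecture hall compositions, in each part it suffices to propose a candidate length-$(n+1)$ composition $\widetilde{\lambda}$ and verify that its image under $\lambda \mapsto \beta$ coincides with the Betti sequence of $T_\delta$ or $T_\iota$ given by Lemma~\ref{lem:betti-shift}.

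For part (1), taking $\widetilde{\lambda} = (\lambda_1,\dots,\lambda_n,0)$, the trailing zero annihilates the $k=n+1$ term in $\sum_{k=i}^{n+1}\binom{k-1}{i-1}\widetilde{\lambda}_k$, so the image equals $(\beta_1,\dots,\beta_n,0) = \beta(T_\delta)$ and the claim follows by bijectivity. For part (2), I would take $\widetilde{\lambda}=(1,\lambda_1+1,\dots,\lambda_n+1)$; under the convention $\lambda_0=\beta_0=0$ this reads uniformly as $\widetilde{\lambda}_k = \lambda_{k-1}+1$, and splitting the sum gives
$$\widetilde{\beta}_i \;=\; \sum_{k=i}^{n+1}\binom{k-1}{i-1}\lambda_{k-1} \;+\; \sum_{k=i}^{n+1}\binom{k-1}{i-1}.$$
The second sum equals $\binom{n+1}{i}$ by the hockey-stick identity. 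After reindexing $j=k-1$ in the first and invoking Pascal's identity $\binom{j}{i-1}=\binom{j-1}{i-1}+\binom{j-1}{i-2}$, the first sum breaks cleanly into $\beta_i + \beta_{i-1}$, so $\widetilde{\beta}_i = \beta_i+\beta_{i-1}+\binom{n+1}{i}$. This matches $\beta(T_\iota)$ from Lemma~\ref{lem:betti-shift}(2) term-by-term, and Theorem~\ref{thm:transform} again closes the argument.

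The main item to watch will be the bookkeeping at the boundary indices $i=1$, where the conventions $\lambda_0=\beta_0=0$ are what allow the single uniform formula $\widetilde{\lambda}_k = \lambda_{k-1}+1$, and $i=n+1$, where the sums truncate; Pascal's rule and the hockey-stick identity both subsume these edge cases naturally, so I do not anticipate any substantive obstacle beyond careful tracking of indices.
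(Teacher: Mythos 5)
Your proposal is correct, but it runs in the opposite direction from the paper's own argument, and the difference is worth noting. The paper computes $\lambda_i(T_\delta)$ and $\lambda_i(T_\iota)$ directly from the alternating inverse formula $\lambda_i = \sum_{k\ge i}(-1)^{i+k}\binom{k-1}{i-1}\beta_k$ applied to the shifted Betti sequences of Lemma~\ref{lem:betti-shift}; for the $T_\iota$ case this forces it to evaluate the alternating sum $\sum_{i=k}^{n}(-1)^{i+k}\binom{n}{i}\binom{i-1}{k-1}=1$, which is isolated as Lemma~\ref{lem:binom} and proved by a sign-reversing involution, plus some careful sign and reindexing bookkeeping. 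You instead propose the candidate compositions $(\lambda_1,\dots,\lambda_n,0)$ and $(1,\lambda_1+1,\dots,\lambda_n+1)$, push them forward through the non-alternating map $\lambda\mapsto\beta$, and check that the images are exactly the sequences in Lemma~\ref{lem:betti-shift}; the only identities needed are the hockey-stick identity $\sum_{k=i}^{n+1}\binom{k-1}{i-1}=\binom{n+1}{i}$ and Pascal's rule, and the conclusion follows because the transform in Theorem~\ref{thm:transform} is unitriangular, hence injective, so any preimage of $\beta(T_\delta)$ or $\beta(T_\iota)$ must be the actual anti-lecture hall composition. Your route avoids Lemma~\ref{lem:binom} and all sign-cancellation arguments at the cost of invoking invertibility of the transform (which the paper's statement of the two mutually inverse formulas already supplies); the paper's route is self-contained at the level of the inverse formula but requires the extra involution lemma. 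The boundary cases you flag ($i=1$ with the conventions $\lambda_0=\beta_0=0$, and $i=n+1$ where $\beta_{n+1}=0$ as an empty sum) do work out exactly as you anticipate, so there is no gap.
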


We will use the following straightforward binomial identity in our proof of Lemma~\ref{lem:alhc-shift}. 

\begin{identity}\label{id:binom}
For every $1 \leq k \leq n$, $$\sum\limits_{i = k}^n (-1)^{i+k} \binom{n}{i} \binom{i-1}{k-1} = 1.$$
\end{identity}



\begin{proof}[Proof of Lemma~\ref{lem:alhc-shift}.]
Let $T$ be a threshold graph on $n+1$ vertices with $\lambda(T) = (\lambda_1,\ldots,\lambda_n)$ and $\beta(T) = ( \beta_1,\ldots,\beta_n)$.  By Theorem~\ref{thm:transform}, $$\lambda_{i} = \sum_{k=i}^{n}\left(-1\right)^{i+k}\binom{k-1}{i-1}\beta_{k}.$$

Let us first consider the threshold graph $T_\delta$.  By Lemma \ref{lem:betti-shift}, $\beta_k(T_\delta) = \beta_k$ for $1 \leq k \leq n$ and $\beta_{n+1}(T_\delta) = 0$.  So, by Theorem~\ref{thm:transform}, $$\lambda_{i}(T_\delta) = \sum_{k=i}^{n+1}\left(-1\right)^{i+k}\binom{k-1}{i-1}\beta_{k}(T_\delta) = \sum_{k=i}^{n}\left(-1\right)^{i+k}\binom{k-1}{i-1}\beta_{k} = \lambda_i$$ for $1 \leq i \leq n$ and $$\lambda_{n+1}(T_\delta) = \beta_{n+1}(T_\delta) = 0.$$  Thus, $\lambda(T_\delta) = (\lambda_1,\ldots,\lambda_n,0)$.  

Next, we consider the threshold graph $T_\iota$.  By Lemma \ref{lem:betti-shift}, $$\beta_{k}\left(T_{\iota}\right)=\begin{cases} \beta_{1}+\binom{n+1}{1} & \text{for }k=1,\\[0.5em] \beta_{k}+\beta_{k-1}+\binom{n+1}{k} & \text{for } 2 \leq k \leq n,\\[0.5em] \beta_{n}+\binom{n+1}{n+1} & \text{for } k = n+1\end{cases}$$  so, by Theorem~\ref{thm:transform}, \begin{align*} \lambda_1(T_\iota) &= \sum_{k=1}^{n+1}\left(-1\right)^{k+1} \beta_{k}(T_\iota) \\[0.5em] &= \sum_{k=1}^{n}\left(-1\right)^{k+1} \beta_{k}+ \sum_{k=2}^{n+1}\left(-1\right)^{k+1} \beta_{k-1} + \sum_{k=1}^{n+1}\left(-1\right)^{k+1} \binom{n+1}{k} \\[0.5em] &= \sum_{k=1}^{n}\left(-1\right)^{k+1} \beta_{k}+ \sum_{k=1}^{n}\left(-1\right)^{k} \beta_{k} + \sum_{k=1}^{n+1}\left(-1\right)^{k+1} \binom{n+1}{k} \\[0.5em] &= 0 + \sum_{k=1}^{n+1}\left(-1\right)^{k+1} \binom{n+1}{k} \\[0.5em] &= 1 \end{align*}
and, for $i \geq 2$, \begin{align*} \lambda_{i}(T_\iota) &= \sum_{k=i}^{n+1}\left(-1\right)^{i+k}\binom{k-1}{i-1}\beta_{k}(T_\iota) \\[0.5em] &= \sum_{k=i}^{n}\left(-1\right)^{i+k}\binom{k-1}{i-1}\beta_{k} + \sum_{k=i}^{n+1}\left(-1\right)^{i+k}\binom{k-1}{i-1}\beta_{k-1} + \sum\limits_{k = i}^{n+1} (-1)^{i+k} \binom{k-1}{i-1}\binom{n+1}{i} \\[0.5em] &= \sum_{k=i}^{n}\left(-1\right)^{i+k}\binom{k-1}{i-1}\beta_{k} + \sum_{k=i-1}^{n}\left(-1\right)^{i+k-1}\binom{k}{i-1}\beta_{k} + 1 \\[0.5em] &= \sum_{k=i-1}^{n}\left(-1\right)^{i+k-1}\binom{k-1}{i-2}\beta_{k} + 1 \\[0.5em] &= \lambda_{i-1} + 1 \end{align*} where the third equality follows from reindexing the middle summand and applying Identity~\ref{id:binom} to the third summand.  Thus, $\lambda(T_\iota) = (1,1+\lambda_1,\ldots,1+\lambda_n)$. 
\end{proof}

Table \ref{tab:betti-alhc} lists the Betti sequences and anti-lecture hall compositions associated to the threshold graphs for $1 \leq n \leq 4$ according to the steps defined in Lemmas \ref{lem:betti-shift} and \ref{lem:alhc-shift} where upper cells correspond to appending dominating vertices and lower cells correspond to appending isolated vertices.  

\begin{table}
\renewcommand{\arraystretch}{1.25}
\setlength{\tabcolsep}{5pt}
\begin{multicols}{2}
\begin{center}
\begin{tabular}{|c|c|c|c|}
\hline 
  $\mathcal{B}_1$ & $\mathcal{B}_2$ & $\mathcal{B}_3$ & $\mathcal{B}_4$  \tabularnewline
\hline 
\hline 
 \multirow{8}{*}{$\left(0\right)$} & \multirow{4}{*}{$\left(0,0\right)$} & \multirow{2}{*}{$\left(0,0,0\right)$} & $\left(0,0,0,0\right)$ \tabularnewline
\cline{4-4} 
  &  &  & $\left(4,6,4,1\right)$ \tabularnewline
\cline{3-4} 
  &  & \multirow{2}{*}{$\left(3,3,1\right)$} & $\left(3,3,1,0\right)$ \tabularnewline
\cline{4-4} 
  &  &  & $\left(7,12,8,2\right)$ \tabularnewline
\cline{2-4} 
  & \multirow{4}{*}{$\left(2,1\right)$} & \multirow{2}{*}{$\left(2,1,0\right)$} & $\left(2,1,0,0\right)$ \tabularnewline
\cline{4-4} 
  &  &  & $\left(6,9,5,1\right)$ \tabularnewline
\cline{3-4} 
  &  & \multirow{2}{*}{$\left(5,6,2\right)$} & $\left(5,6,2,0\right)$ \tabularnewline
\cline{4-4} 
  &  &  & $\left(9,17,12,3\right)$ \tabularnewline
\cline{1-4} 
 \multirow{8}{*}{$\left(1\right)$} & \multirow{4}{*}{$\left(1,0\right)$} & \multirow{2}{*}{$\left(1,0,0\right)$} & $\left(1,0,0,0\right)$ \tabularnewline
\cline{4-4} 
  &  &  & $\left(5,7,4,1\right)$ \tabularnewline
\cline{3-4} 
  &  & \multirow{2}{*}{$\left(4,4,1\right)$} & $\left(4,4,1,0\right)$ \tabularnewline
\cline{4-4} 
  &  &  & $\left(8,14,9,2\right)$ \tabularnewline
\cline{2-4} 
  & \multirow{4}{*}{$\left(3,2\right)$} & \multirow{2}{*}{$\left(3,2,0\right)$} & $\left(3,2,0,0\right)$ \tabularnewline
\cline{4-4} 
  &  &  & $\left(7,11,6,1\right)$ \tabularnewline
\cline{3-4} 
  &  & \multirow{2}{*}{$\left(6,8,3\right)$} & $\left(6,8,3,0\right)$ \tabularnewline
\cline{4-4} 
   &  &  & $\left(10,20,15,4\right)$ \tabularnewline
\hline 
\end{tabular}

\begin{tabular}{|c|c|c|c|}
\hline 
$\mathcal{A}_1$ & $\mathcal{A}_2$ & $\mathcal{A}_3$ & $\mathcal{A}_4$  \tabularnewline
\hline 
\hline 
 \multirow{8}{*}{$\left(0\right)$} & \multirow{4}{*}{$\left(0,0\right)$} & \multirow{2}{*}{$\left(0,0,0\right)$} & $\left(0,0,0,0\right)$ \tabularnewline
\cline{4-4} 
  &  &  & $\left(1,1,1,1\right)$ \tabularnewline
\cline{3-4} 
 &  & \multirow{2}{*}{$\left(1,1,1\right)$} & $\left(1,1,1,0\right)$ \tabularnewline
\cline{4-4} 
  &  &  & $\left(1,2,2,2\right)$ \tabularnewline
\cline{2-4} 
  & \multirow{4}{*}{$\left(1,1\right)$} & \multirow{2}{*}{$\left(1,1,0\right)$} & $\left(1,1,0,0\right)$ \tabularnewline
\cline{4-4} 
  &  &  & $\left(1,2,2,1\right)$ \tabularnewline
\cline{3-4} 
  &  & \multirow{2}{*}{$\left(1,2,2\right)$} & $\left(1,2,2,0\right)$ \tabularnewline
\cline{4-4} 
  &  &  & $\left(1,2,3,3\right)$ \tabularnewline
\cline{1-4} 
  \multirow{8}{*}{$\left(1\right)$} & \multirow{4}{*}{$\left(1,0\right)$} & \multirow{2}{*}{$\left(1,0,0\right)$} & $\left(1,0,0,0\right)$ \tabularnewline
\cline{4-4} 
  &  &  & $\left(1,2,1,1\right)$ \tabularnewline
\cline{3-4} 
  &  & \multirow{2}{*}{$\left(1,2,1\right)$} & $\left(1,2,1,0\right)$ \tabularnewline
\cline{4-4} 
   &  &  & $\left(1,2,3,2\right)$ \tabularnewline
\cline{2-4} 
  & \multirow{4}{*}{$\left(1,2\right)$} & \multirow{2}{*}{$\left(1,2,0\right)$} & $\left(1,2,0,0\right)$ \tabularnewline
\cline{4-4} 
   &  &  & $\left(1,2,3,1\right)$ \tabularnewline
\cline{3-4} 
   &  & \multirow{2}{*}{$\left(1,2,3\right)$} & $\left(1,2,3,0\right)$ \tabularnewline
\cline{4-4} 
  &  &  & $\left(1,2,3,4\right)$ \tabularnewline
\hline 
\end{tabular}
\end{center}
\end{multicols}
\caption{Betti sequences and anti-lecture hall compositions of threshold graphs.}\label{tab:betti-alhc}
\end{table}


We are now ready to state and prove the explicit bijections between threshold graphs and Betti sequences and between threshold graphs and anti-lecture hall compositions.  

\subsection{Threshold Graphs and Anti-Lecture Hall Compositions}\label{TG:ALHC}

We begin by showing how to move directly between threshold graphs and anti-lecture hall compositions without having to pass through the Betti sequences.    

\begin{theorem}\label{thm:threshold-to-alhc}
If $T$ is a threshold graph on $n+1$ vertices, then $$\lambda_{k}(T) = \left|E^\lambda_k(\overline{T})\right|$$ where $E^\lambda_k(\overline{T}) = \{uv \notin E(T) \ | \ \ell_\lambda(uv) = k\}$.  
\end{theorem}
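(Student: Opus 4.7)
The plan is to proceed by induction on $n$, exploiting the recursive characterization of threshold graphs (Proposition~\ref{prop:char}) and matching the edge-count recursion against the recursion for $\lambda(T)$ given in Lemma~\ref{lem:alhc-shift}. The base case $n=0$ is vacuous since there are no non-edges and $\lambda$ is the empty sequence. For the inductive step, write $T'$ for a threshold graph on $n+1$ vertices and consider the two possible extensions $T'_\delta$ and $T'_\iota$ on $n+2$ vertices; the goal is to show that $|E^\lambda_k(\overline{T'_\delta})|$ and $|E^\lambda_k(\overline{T'_\iota})|$ obey exactly the recursions in Lemma~\ref{lem:alhc-shift}.

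The key observation is how $i_T(v)$ transforms under each extension. When we append a dominating vertex to form $T'_\delta$, no new non-edges appear and the set of isolated vertices is unchanged, so $i_{T'_\delta}(v) = i_{T'}(v)$ for every $v \leq n$. Therefore $\ell_\lambda$ is preserved on every non-edge, giving $|E^\lambda_k(\overline{T'_\delta})| = |E^\lambda_k(\overline{T'})|$ for $k \leq n$ and $|E^\lambda_{n+1}(\overline{T'_\delta})| = 0$, which matches $(\lambda_1, \ldots, \lambda_n, 0)$. When we append an isolated vertex $n+1$ to form $T'_\iota$, two things happen: first, every old non-edge $uv$ (with $v \leq n$) now has one more isolated vertex (namely $n+1$) above $v$, so its $\lambda$-label increases by exactly $1$; second, $n+1$ contributes $n+1$ brand-new non-edges $\{u, n+1\}$ for $u = 0, 1, \ldots, n$, each carrying the label $\ell_\lambda(u, n+1) = u + 1 + i_{T'_\iota}(n+1) = u+1$ since no isolated vertex exceeds $n+1$.

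Combining these observations, for $k = 1$ the only non-edge with label $1$ is $\{0, n+1\}$, so $|E^\lambda_1(\overline{T'_\iota})| = 1$. For $2 \leq k \leq n+1$, contributions come from exactly one new non-edge (namely $\{k-1, n+1\}$) plus all old non-edges whose old $\lambda$-label was $k-1$; by the inductive hypothesis there are $\lambda_{k-1}(T')$ of the latter. Hence $|E^\lambda_k(\overline{T'_\iota})| = 1 + \lambda_{k-1}(T')$, which matches the formula in Lemma~\ref{lem:alhc-shift}(2), completing the induction.

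The routine parts are the base case and the $T'_\delta$ case. The main thing to watch is the $T'_\iota$ case, where one must track two independent effects simultaneously—the label shift on old non-edges and the contribution of the newly created non-edges—and verify that together they reproduce the $(1, \lambda_1 + 1, \ldots, \lambda_n + 1)$ pattern. No new binomial identities or combinatorial gadgets are needed beyond what Lemma~\ref{lem:alhc-shift} already supplies.
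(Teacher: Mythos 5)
Your proof is correct and follows essentially the same route as the paper: induct along the recursive construction of threshold graphs, track how the $\lambda$-labels shift under appending a dominating or isolated vertex, and match the resulting edge counts against the recurrences of Lemma~\ref{lem:alhc-shift}. The only cosmetic difference is your vacuous $n=0$ base case versus the paper's two explicit $n=1$ cases.
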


\begin{proof}[Proof of Theorem~\ref{thm:threshold-to-alhc}.]
We prove that the formula satisfies the recurrences in Lemma~\ref{lem:alhc-shift}.  There are two initial cases for $n = 1$. For the graph $T = T(1,\emptyset)$, $E^\lambda_1(\overline{T}) = \emptyset$; so the formula gives $\lambda_1(T) = 0$.  For the graph $T = T(1,\{1\})$, $i_T(0) = 1$, $i_T(1) = 0$, and $E^\lambda_1(\overline{T}) = \{\{0,1\}\}$; so the formula gives $\lambda_1(T) = 1$. Now suppose that the formula is true for threshold graph $T = T(n,\sigma)$.  We will show that the formula holds for $T_\delta$ and $T_\iota$.    

For $T_\delta$, observe that $i_{T_\delta}(v) = i_{T}(v)$ for $1 \leq v \leq n$, $i_{T_\delta}(n+1) = 0$, and $E(\overline{T_\delta}) = E(\overline{T})$.  It follows that $E^\lambda_k(\overline{T_\delta}) = E^\lambda_k(\overline{T})$ for $1 \leq k \leq n$ and $E^\lambda_{n+1}(\overline{T_{\delta}}) = \emptyset$. Hence the formula gives $\lambda_k(T_\delta) = \lambda_k(T)$ for $1 \leq k \leq n$ and $\lambda_{n+1}(T_\delta) = 0$.  

For $T_\iota$, observe that $i_{T_\iota}(v) = i_{T}(v)+1$ for $1 \leq v \leq n$, $i_{T_\iota}(n+1) = 0$, and $$E(\overline{T_\iota}) = E(\overline{T}) \cup \{\{i,n+1\} \ | \ 0 \leq i \leq n\}.$$  Since $\ell_\lambda(\{i,n+1\}) = i+1$, it follows that $E^\lambda_1(\overline{T_\iota}) = \{0,n+1\}$ and $$E^\lambda_k(\overline{T_\iota}) = E^\lambda_{k-1}(\overline{T}) \cup \{\{k-1,n+1\}\}$$ for $2 \leq k \leq n+1$. Hence $\lambda_1(T_\iota) = 1$ and $\lambda_k(T_\iota) = \lambda_{k-1}(T) + 1$ for $2 \leq k \leq n+1$.  
\end{proof}

\begin{example}
For the threshold graph $T = T(5,\{1,2,4\})$ in Figure~\ref{fig:thresholdgraphexample}, recall that $i_T(1) = 2$, $i_T(2) = i_T(3) = 1$, and $i_T(4) = i_T(5) = 0$ and note that \begin{align*} E^\lambda_1(\overline{T}) &= \{04\}, \\[0.5em] E^\lambda_2(\overline{T}) &= \{12,14\}, \\[0.5em] E^\lambda_3(\overline{T}) &= \{01,12, 24\}, \\[0.5em] E^\lambda_4(\overline{T}) &= \{34\}, \\[0.5em] E^\lambda_5(\overline{T}) &= \emptyset\end{align*} from the $\lambda$-labeling of $\overline{T}$ in Figure~\ref{fig:ex}.  Theorem~\ref{thm:threshold-to-alhc} asserts that $\lambda(T) = (1,2,3,1,0)$, which is indeed the case. 
\end{example}

For the reverse direction, we can quickly calculate the dominating (or isolated) vertices of the threshold graph associated to a given anti-lecture hall composition.  

\begin{theorem}\label{thm:differences}
If $\lambda = (\lambda_1, \ldots, \lambda_n)$ is an anti-lecture hall composition of length $n$ bounded above by $1$, then $T(\lambda) = T(n,\sigma)$ where $\sigma$ is the complement in $\{1,\ldots,n\}$ of the nonzero elements in $\{ k - \lambda_k \ | \ 1 \leq k \leq n\}.$ 
\end{theorem}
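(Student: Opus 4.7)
The plan is to prove the theorem by induction on $n$, using the recursive structure of threshold graphs from Proposition~\ref{prop:char} together with Lemma~\ref{lem:alhc-shift}. The base case $n=1$ reduces to a direct check of the two threshold graphs $T(1,\emptyset)$ and $T(1,\{1\})$, whose anti-lecture hall compositions are $(0)$ and $(1)$ respectively; in both instances the prescribed recipe correctly recovers the isolated set.

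For the inductive step, I assume the statement holds for a threshold graph $T=T(n,\sigma)$ with $\lambda(T)=(\lambda_1,\ldots,\lambda_n)$, so that the nonzero values among $\{k-\lambda_k\}_{k=1}^{n}$ are exactly $\{1,\ldots,n\}\setminus\sigma$. I then verify the statement for $T_\delta$ and $T_\iota$ separately by tracking how the differences $k-\lambda_k$ transform under the two updates described in Lemma~\ref{lem:alhc-shift}. For $T_\delta$, the new composition $(\lambda_1,\ldots,\lambda_n,0)$ leaves the first $n$ differences unchanged and contributes one extra value $(n{+}1)-0=n+1$; since $k-\lambda_k\leq n$ for every $k\leq n$, this extra value does not collide with any previous nonzero value, and taking complements inside the enlarged universe $\{1,\ldots,n+1\}$ therefore returns exactly $\sigma$. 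For $T_\iota$, the new composition $(1,\lambda_1+1,\ldots,\lambda_n+1)$ produces $1-1=0$ at index $1$ and $k-(\lambda_{k-1}+1)=(k-1)-\lambda_{k-1}$ at index $k\geq 2$; in other words, the difference sequence is obtained by prepending a zero to the original, so its set of nonzero values is unchanged. Its complement in $\{1,\ldots,n+1\}$ is then $\sigma\cup\{n+1\}$, which is precisely the isolated set of $T_\iota$.

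The only real subtlety is the bookkeeping for the changing ambient universe $\{1,\ldots,n{+}1\}$ in which the complement is taken, combined with the simple but essential observation that $k-\lambda_k\leq n$ whenever $k\leq n$, which guarantees that the fresh contribution $n+1$ in the $T_\delta$ step is genuinely new and cannot accidentally cancel a prior difference. Beyond this careful index-keeping, I do not anticipate any substantive obstacle.
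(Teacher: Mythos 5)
Your proof is correct and follows essentially the same route as the paper: induction on $n$ using the recursion in Lemma~\ref{lem:alhc-shift}, with the two cases corresponding to appending a dominating versus an isolated vertex (the paper phrases the case split as $\lambda_n=0$ versus $\lambda_n\neq 0$ rather than building $T_\delta$ and $T_\iota$ from the graph side, but the argument is the same). Your bookkeeping of the differences $k-\lambda_k$ in both cases matches the paper's computation, so there is nothing to add.
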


\begin{proof}
Let $\tau(T) = \{k - \lambda_k(T) \ | \ 1 \leq k \leq n\}$ for an arbitrary threshold graph $T$.  We want to show that $\tau(T(n,\sigma)) = [n] \setminus \sigma$ for every $n \in \mathbb{N}$ and $\sigma \subseteq [n]$ and proceed by induction on $n$. For the base cases ($n=1$), observe that $T(1,\emptyset)$ is the threshold graph associated to the anti-lecture hall composition $(0)$ and $T(1,\{1\})$ is the threshold graph associated to $(1)$. 

For the induction step, let $n \geq 2$, suppose that $\tau(T(m,\sigma)) = [m] \setminus \sigma$ for every $m < n$ and $\sigma \subseteq [m]$, let $\lambda = (\lambda_1,\ldots,\lambda_n)$ be an anti-lecture hall composition, let $T = T(n,\sigma)$ be its associated threshold graph.  If $\lambda_n = 0$, then $n = n - \lambda_n \in \tau(T)$ and $T = T_\delta^\prime$ for $T^\prime = T(n-1,\sigma)$.  By the induction hypothesis, $\tau(T^\prime) = [n-1] \setminus \sigma$, so $$\tau(T) = \{n\} \cup [n-1] \setminus \sigma = [n] \setminus \sigma.$$  If $\lambda_n \neq 0$, then $n \in \sigma$ and $T = T_{\iota}^\prime$ for $T^\prime = T(n-1,\sigma \setminus \{n\})$, which means $\lambda_1 = 1$ and $\lambda_k = 1 + \lambda_{k-1}(T^\prime)$ for $2 \leq k \leq n$.  By the induction hypothesis, $\tau(T^\prime) = [n-1] \setminus (\sigma \setminus \{n\}) = [n] \setminus \sigma$.  Since $1 - \lambda_1 = 0$ and $k - \lambda_k = k - (1+\lambda_{k-1}(T^\prime)) = (k-1) - \lambda_{k-1}(T^\prime)$ for $2 \leq k \leq n$, it follows that $\tau(T) = \tau(T^\prime) = [n] \setminus \sigma$.
\end{proof}

\begin{example}
For the anti-lecture hall composition $\lambda = (1,2,3,1,0)$, note that $1-1 = 0$, $2-2 = 0$, $3-3 = 0$, $4 - 1 = 3$, and $5-0 = 5$.  Theorem~\ref{thm:differences} asserts that the dominating vertices of the threshold graph $T$ associated to $\lambda$ are $3$ and $5$. Hence $T = T(5,\{1,2,4\})$, which is indeed the case.
\end{example}

\subsection{Threshold Graphs and Betti Sequences}\label{TG:BS}

We now consider an explicit relationship between threshold graphs and Betti sequences. Dochtermann and Engstr\"om \cite{DE09} proved the following formula for the Betti sequence of a chordal graph.

\begin{theorem}[Theorem 3.2, \cite{DE09}]\label{thm:DE}
If $G$ is a chordal graph, then
$$\beta_{k}\left(\Bbbk\left[G\right]\right)=\sum_{W\in\binom{V\left(G\right)}{k+1}}\left(-1+\#\text{ components of }G\left[W\right]\right)$$ where the sum runs over every $(k+1)$-element subset $W$ of $V(G)$.
\end{theorem}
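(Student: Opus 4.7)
The plan is to realize $I_c(G)$ as a Stanley--Reisner ideal and then apply Hochster's formula together with Fr\"oberg's theorem. A subset $W \subseteq V(G)$ fails to span a clique of $G$ exactly when some pair in $W$ is a non-edge, so the minimal non-faces of the clique complex $\mathrm{Cl}(G)$ are precisely the non-edges of $G$. Hence the coedge ideal $I_c(G)$ coincides with the Stanley--Reisner ideal of $\mathrm{Cl}(G)$, and $\Bbbk[G] = \Bbbk[\mathrm{Cl}(G)]$.

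With this identification in hand, I would invoke Hochster's formula, which expresses the bigraded Betti numbers as
$$\beta_{i,j}(\Bbbk[\mathrm{Cl}(G)]) = \sum_{W \in \binom{V(G)}{j}} \dim_{\Bbbk} \tilde{H}_{j-i-1}(\mathrm{Cl}(G)|_W; \Bbbk),$$
and observe that the restriction $\mathrm{Cl}(G)|_W$ is exactly $\mathrm{Cl}(G[W])$. Since $G$ is chordal, Fr\"oberg's theorem (already cited in the excerpt) forces the resolution of $\Bbbk[G]$ to be $2$-linear, so for $k \geq 1$ the only nonzero Betti number with homological index $k$ lies in internal degree $k+1$; the single-index notation $\beta_k$ of the excerpt therefore equals $\beta_{k,k+1}$.

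Setting $j = k+1$ and $i = k$ collapses the homological shift to $\tilde{H}_0$, whose dimension equals one less than the number of connected components of $\mathrm{Cl}(G[W])$. Since the edges of $\mathrm{Cl}(G[W])$ are exactly the edges of $G[W]$, the two complexes have identical connected components, yielding the claimed formula. The main obstacle here is essentially bookkeeping---tracking the index shift in Hochster's formula and confirming that chordality invokes Fr\"oberg's theorem in the correct direction---rather than any deep structural argument.
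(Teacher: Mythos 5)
Your derivation is correct: identifying $I_c(G)$ with the Stanley--Reisner ideal of the flag (clique) complex, applying Hochster's formula with $j=k+1$, $i=k$, and using Fr\"oberg's theorem to justify that the single-index $\beta_k$ means $\beta_{k,k+1}$ gives exactly the stated component-counting formula (for $k\geq 1$, where $W$ is nonempty so $\dim\tilde{H}_0$ is indeed one less than the number of components). Note that the paper itself offers no proof of this statement---it is quoted from Dochtermann and Engstr\"om \cite{DE09}---so there is no internal argument to compare against; your Hochster-plus-Fr\"oberg route is the standard one and is essentially in the spirit of the cited source.
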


The formula in Theorem~\ref{thm:DE}, which applies to threshold graphs since threshold graphs are chordal, is combinatorial in nature and bijective when restricted to threshold graphs \cite{ES13}. However, it can be rather cumbersome to work with since calculating the full Betti sequence involves summing over all the subsets of $V(G)$, and counting connected components of induced subgraphs is not always straightforward.  Below, we introduce a new purely combinatorial formula for the Betti sequence of a threshold graph $T$ based on the $\beta$-labeling of $\overline{T}$, which also has the advantage that the summation is over the non-edges of $T$.  

\begin{theorem}\label{thm:Betti-direct}
If $T$ is a threshold graph on $n+1$ vertices, then $$\beta_k(T) = \sum\limits_{uv \in E^\beta_k(\overline{T})} \binom{v}{u+1} \binom{i_{T}(v)}{k - u - 1}$$ where $E^\beta_k(\overline{T}) = \{ uv \notin E(T) \ | \ k \in \ell_\beta(uv) \}$. 
\end{theorem}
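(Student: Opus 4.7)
My plan is to prove the identity by induction on $n$, matching the recursive structure used in Theorems~\ref{thm:threshold-to-alhc} and \ref{thm:differences}, together with the Betti-shift recurrence in Lemma~\ref{lem:betti-shift}. Let $S_k(T)$ denote the right-hand side of the claimed formula. The base case $n=1$ consists only of $T(1,\emptyset)$ (no non-edges, so $S_k = 0$ and $\beta_1=0$) and $T(1,\{1\})$ (one non-edge $\{0,1\}$ with $i_T(1)=0$, giving $S_1 = \binom{1}{1}\binom{0}{0}=1$, matching $\beta_1 = 1$). Assuming the formula for all threshold graphs on $n+1$ vertices, I will verify it separately for $T_\delta$ and $T_\iota$ by comparing $S_k(T_\delta)$ and $S_k(T_\iota)$ against Lemma~\ref{lem:betti-shift}.

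For the dominating extension $T_\delta$, the key observation is that the new vertex $n+1$ creates no new non-edges, the values $i_{T_\delta}(v) = i_T(v)$ are unchanged for $v \leq n$, and $i_{T_\delta}(n+1)=0$. Hence $E^\beta_k(\overline{T_\delta}) = E^\beta_k(\overline{T})$ for $k \leq n$ and $E^\beta_{n+1}(\overline{T_\delta}) = \emptyset$, so $S_k(T_\delta) = S_k(T) = \beta_k(T)$ for $k \leq n$ and $S_{n+1}(T_\delta) = 0$, exactly matching Lemma~\ref{lem:betti-shift}(1).

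The isolated extension $T_\iota$ is the main content. Here $i_{T_\iota}(v) = i_T(v)+1$ for $v \leq n$, $i_{T_\iota}(n+1)=0$, and $E(\overline{T_\iota}) = E(\overline{T}) \cup \{\{i,n+1\}: 0 \leq i \leq n\}$. I will split $S_k(T_\iota)$ into old and new non-edges. On an old non-edge $uv$ (with $v \leq n$), the label set becomes $\ell_\beta^{T_\iota}(uv) = \ell_\beta^T(uv) \cup \{u+2+i_T(v)\}$, and I will apply Pascal's identity
$$\binom{i_T(v)+1}{k-u-1} = \binom{i_T(v)}{k-u-1} + \binom{i_T(v)}{k-u-2}.$$
The first summand contributes exactly when $k \in \ell_\beta^T(uv)$, recovering $\beta_k(T)$; the second contributes exactly when $k-1 \in \ell_\beta^T(uv)$, recovering $\beta_{k-1}(T)$ after re-indexing. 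For a new non-edge $\{i,n+1\}$, the label set is $\{i+1\}$, so this non-edge contributes $\binom{n+1}{i+1}\binom{0}{k-i-1}$, which is nonzero only when $k=i+1$ and equals $\binom{n+1}{k}$; summing over $i=0,\ldots,n$ collapses to the single surviving term $\binom{n+1}{k}$. Adding the three pieces gives $S_k(T_\iota) = \beta_k(T)+\beta_{k-1}(T)+\binom{n+1}{k}$, matching Lemma~\ref{lem:betti-shift}(2).

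The main obstacle is purely bookkeeping: correctly tracking which non-edges contribute to each label set $E^\beta_k$ after the index shift and verifying the endpoint cases in Pascal's identity (in particular that the boundary terms where $k-u-1$ exceeds $i_T(v)$ or equals $0$ are handled by the convention $\binom{m}{j}=0$ for $j<0$ or $j>m$). Once the Pascal decomposition is set up cleanly on the label sets, the identification with the three summands of Lemma~\ref{lem:betti-shift}(2) follows from the same reindexing trick $k \mapsto k-1$ used in the proof of Lemma~\ref{lem:alhc-shift}, so no new combinatorial identity is needed.
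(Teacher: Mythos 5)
Your proposal is correct and follows essentially the same route as the paper: induction over the dominating/isolated construction, verifying that the right-hand side satisfies the recurrence of Lemma~\ref{lem:betti-shift}, with Pascal's identity handling the old non-edges and the new non-edges $\{i,n+1\}$ collapsing to $\binom{n+1}{k}$. The only cosmetic difference is that you apply Pascal uniformly with the vanishing-binomial convention, whereas the paper splits the old non-edges into the three cases $E^\beta_k\setminus E^\beta_{k-1}$, $E^\beta_{k-1}\setminus E^\beta_k$, and $E^\beta_k\cap E^\beta_{k-1}$; the content is the same.
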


\begin{proof}[Proof of Theorem~\ref{thm:Betti-direct}]
We prove that the formula satisfies the recursive formulae in Lemma~\ref{lem:betti-shift}.  There are two initial cases for $n =1$.  For the graph $T = T(1,\emptyset)$, $E^\beta_1(\overline{T}) = \emptyset$; so the proposed formula gives $\beta_1(T) = 0$.  For the graph $T = T(1,\{1\})$, $i_T(0) = 1$, $i_T(1) = 0$, and $E^\beta_1(\overline{T}) = \{\{0,1\}\}$; so the proposed formula gives $\beta_1(T) = \binom{1}{1}\binom{0}{0} = 1$.  Now suppose the proposed formula is true for threshold graph $T = T(n,\sigma)$; we will show that it holds for $T_\delta$ and $T_\iota$ as well.  

For $T_\delta$, observe that $i_{T_\delta}(v) = i_{T}(v)$ for $1 \leq v \leq n$, $i_{T_\delta}(n+1) = 0$, and $E(\overline{T_\delta}) = E(\overline{T})$.  It follows that $E^\beta_k(\overline{T_\delta}) = E^\beta_k(\overline{T})$ for $1 \leq k \leq n$ and $E^\beta_{n+1}(\overline{T_{\delta}}) = \emptyset$.  Thus, the formula gives $\beta_k(T_\delta) = \beta(T)$ for $1 \leq k \leq n$ and $\beta_{n+1}(T_\delta) = 0$.  

For $T_\iota$, observe that $i_{T_\iota}(v) = i_{T}(v)+1$ for $1 \leq v \leq n$ and $i_{T_\iota}(n+1) = 0$.
For $k = 1$, $E^\beta_1(\overline{T_\iota}) = E^\beta_1(\overline{T}) \cup \big\{\{0,n+1\}\big\}$, which means $$\beta_1(T_\iota) = \beta_1(T)+\binom{n+1}{1} \binom{0}{0} = \beta_1(T) + \binom{n+1}{1}.$$  For $2 \leq k \leq n$, $$E^\beta_k(\overline{T_\iota}) = E^\beta_k(\overline{T}) \cup E^\beta_{k-1}(\overline{T}) \cup \big\{ \{k-1,n+1\} \big\}.$$    If $\{u,v\} \in E^\beta_k(\overline{T}) \setminus E^\beta_{k-1}(\overline{T})$, then $u = k-1$ and $$\displaystyle \binom{v}{u+1} \binom{i_{T_\iota}(v)}{0} = \binom{v}{u+1} \binom{i_{T}(v)}{0}.$$  If $\{u,v\} \in E^\beta_{k-1}(\overline{T}) \setminus E^\beta_{k}(\overline{T})$, then $i_T(v) = k - u - 2$ and $$\displaystyle \binom{v}{u+1} \binom{k - u - 1}{k - u - 1} = \binom{v}{u+1} \binom{k-u-2}{k-u-2}.$$ If $\{u,v\} \in E^\beta_k(\overline{T}) \cap E^\beta_{k-1}(\overline{T})$, then $$\binom{v}{u+1} \binom{i_{T_\iota}(v)}{k-u-1} = \binom{v}{u+1} \left( \binom{i_T(v)}{k-u-1} + \binom{i_T(v)}{k-u-2}\right).$$ Bringing this all together we see that the formula satisfies $$\beta_k(T_{\iota}) = \beta_k(T) + \beta_{k-1}(T) + \binom{n+1}{k}$$ for $2 \leq k \leq n$.  Finally, for $k = n+1$, $E^\beta_{n+1}(\overline{T_{\iota}}) = E^\beta_n(\overline{T}) \cup \big\{\{n,n+1\}\big\}$, so $$\beta_{n+1}(T_\iota) = \beta_n(T) + \binom{n+1}{n+1} \binom{0}{0} = \beta_n(T) + \binom{n+1}{n+1}.$$  This completes the proof.
\end{proof}

\begin{example}
For the threshold graph $T = T(5,\{1,2,4\})$ in Figure~\ref{fig:thresholdgraphexample}, recall that $i_T(1) = 2$, $i_T(2) = i_T(3) = 1$, and $i_T(4) = i_T(5) = 0$ and note that 
\begin{align*} E^\beta_1(\overline{T}) &= \{01,02,04\} \\[0.5em] E^\beta_2(\overline{T}) &= \{01,02,12,14\} \\[0.5em] E^\beta_3(\overline{T}) &= \{01,12,24\} \\[0.5em] E^\beta_4(\overline{T}) &= \{34\} \\[0.5em] E^\beta_5(\overline{T}) &= \emptyset \end{align*} from the $\beta$-labeling of $\overline{T}$ in Figure~\ref{fig:ex}.  Theorem~\ref{thm:Betti-direct} asserts that \begin{align*} \beta_1(T) &= \binom{1}{1} \binom{2}{0} + \binom{2}{1} \binom{1}{0} + \binom{4}{1} \binom{0}{0} = 7 \\[1em] \beta_2(T) &= \binom{1}{1} \binom{2}{1} + \binom{2}{1} \binom{1}{1} + \binom{2}{2} \binom{1}{0} + \binom{4}{2} \binom{0}{0} = 11 \\[1em] \beta_3(T) &= \binom{1}{1} \binom{2}{2} + \binom{2}{2} \binom{1}{1} + \binom{4}{3} \binom{0}{0} = 6 \\[1em] \beta_4(T) &= \binom{4}{4} \binom{0}{0} = 1 \\[1em] \beta_5(T) &= 0, \end{align*} which is indeed the Betti sequence associated to $T$.  
\end{example}

Finally, we show how one can determine the threshold graph associated to a given Betti sequence.  Since appending a dominating vertex to a graph corresponds to appending a $0$ to its corresponding Betti sequence, and appending an isolated vertex results in a Betti sequence whose last entry is nonzero, one can read off the generating sequence of the threshold graph corresponding to a given by sequence by sequentially inverting the steps in Lemma~\ref{lem:betti-shift}, as demonstrated in Example~4.14 of \cite{ES13}. Here, we give a more direct way to determine the threshold graph associated to a given anti-lecture hall composition, analogous to Theorem~\ref{thm:differences}.  
 
\begin{theorem}\label{thm:alhc-differences}
If $\beta = (\beta_1, \ldots, \beta_n)$ is the Betti vector of a $2$-linear ideal, then $T(\beta) = T(n,\sigma)$ where $\sigma$ is the complement in $\{1,\ldots,n\}$ of the set of nonzero elements of $$\left\{i - \sum\limits_{k = i}^n (-1)^{i+k} \binom{k-1}{i-1} \beta_k \ \Bigg| \ 1 \leq i \leq n \right\}.$$ 
\end{theorem}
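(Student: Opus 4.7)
The plan is to reduce the statement directly to the composition of the two transformations already established: Theorem~\ref{thm:transform}, which converts a Betti sequence into its corresponding anti-lecture hall composition, and Theorem~\ref{thm:differences}, which recovers the threshold graph from its anti-lecture hall composition.

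First, I would observe that the inner sum appearing in the statement is exactly the formula given in Theorem~\ref{thm:transform} for the $i$-th entry of the anti-lecture hall composition $\lambda = (\lambda_1,\ldots,\lambda_n)$ corresponding to $\beta$. That is, for each $1 \leq i \leq n$,
\[
\lambda_i \;=\; \sum_{k=i}^{n} (-1)^{i+k}\binom{k-1}{i-1}\beta_k,
\]
so the set under consideration can be rewritten as
\[
\left\{\, i - \lambda_i \ \Big|\ 1 \leq i \leq n \,\right\}.
\]

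Next, I would apply Theorem~\ref{thm:differences} to the anti-lecture hall composition $\lambda$. That theorem asserts precisely that the threshold graph associated to $\lambda$ is $T(n,\sigma)$, where $\sigma$ is the complement (in $[n]$) of the nonzero elements of $\{k - \lambda_k \mid 1 \leq k \leq n\}$. Since the Betti sequence $\beta$ and the anti-lecture hall composition $\lambda$ correspond to the same threshold graph under the bijections of Theorem~\ref{thm:correspondence}, this $T(n,\sigma)$ is the threshold graph associated to $\beta$, and the claim follows immediately by substituting the expression for $\lambda_i$ back in.

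There is no real obstacle here; the statement is essentially a bookkeeping consequence of chaining the two previous theorems. The only thing worth writing explicitly is the one-line verification that the expression in braces is $\{i-\lambda_i \mid 1 \leq i \leq n\}$, after which Theorem~\ref{thm:differences} finishes the argument. In particular, no new combinatorial identity (such as Lemma~\ref{lem:binom}) or recursive argument (as in Theorems~\ref{thm:threshold-to-alhc} and \ref{thm:Betti-direct}) is needed.
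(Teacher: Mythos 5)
Your proposal is correct and matches the paper's own argument, which likewise deduces the result as a direct consequence of Theorems~\ref{thm:transform} and \ref{thm:differences}. The only difference is that you spell out the substitution $\lambda_i = \sum_{k=i}^{n}(-1)^{i+k}\binom{k-1}{i-1}\beta_k$ explicitly, which the paper leaves implicit.
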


\begin{proof}
This is a straightforward consequence of Theorems \ref{thm:transform} and \ref{thm:differences}.
\end{proof}

\begin{example}
For the Betti sequence $\beta = (7,11,6,1,0)$, note that \begin{align*} 1 - 1 \cdot 7 + 1 \cdot 11 - 1 \cdot 6 + 1 \cdot 1 - 1 \cdot 0 &= 0 \\[0.5em] 2 - 1 \cdot 11 + 2 \cdot 6 - 3 \cdot 1 + 4 \cdot 0 &= 0 \\[0.5em] 3 - 1 \cdot 6 + 3 \cdot 1 - 3 \cdot 0 &= 0 \\[0.5em] 4 - 1 \cdot 1 + 4 \cdot 0 &= 3 \\[0.5em] 5 - 1 \cdot 0 & = 5.\end{align*}  Theorem~\ref{thm:alhc-differences} asserts that the dominating vertices of the threshold graph $T$ associated to $\beta$ are $3$ and $5$. Hence $T = T(5,\{1,2,4\})$, which is indeed the case.
\end{example}

\section{Probabilistic Results}

Having established explicit relationships between threshold graphs, Betti sequences, and anti-lecture hall compositions, it is natural to consider what these various objects might look like when a random model is imposed on them. In this second main section of this paper, we apply the bijections in the previous section to a random model for threshold graphs, thereby constructing associated random models for Betti sequences and anti-lecture hall compositions.  With these, we are able to calculate the expected Betti numbers and anti-lecture hall compositions for a given $n \in \mathbb{N}$ with respect to a $1$-parameter family of distributions that includes the uniform distribution.  

\subsection{Random Threshold Graphs}

Random threshold graphs have been studied extensively, for instance by Reilly and Scheinermann \cite{RS09}, where they establish a wide array of expected values for properties and parameters including connectivity, maximum and minimum degrees, degree sequences, Laplacian eigenvalues, chromatic and clique numbers, cyclicity, and Hamiltonicity, to name a few.  Even more impressive than the breadth of properties considered is that the above results are all exact (as opposed to asymptotic, which is often the case for analogous results, say on Erd\H{o}s-R\'enyi random graphs).  Random threshold graphs also played a central role in some early works on graph limits, see the paper of Diaconis, Holmes, and Janson \cite{DHS08}.  

There are several natural and equivalent random models for threshold graphs.  We define them according to the model introduced by Hagberg, Swart, and Schult \cite{HSS06}.  

\begin{definition}
Given $n \in \mathbb{N}$ and $p \in [0,1]$, let $T(n,p)$ denote the threshold graph $T(n,\sigma)$ where the probability that $i \in \sigma$ is equal to $p$ for each $i \in \{1,\ldots,n\}$.
\end{definition}

We note that $T(n,1/2)$ corresponds to the uniform distribution on $\mathcal{T}_n$ since the probability of observing $T(n,\sigma)$ in $T(n,p)$ is $p^{|\sigma|}(1-p)^{n-|\sigma|}$, which is $\frac{1}{2^n} = \frac{1}{|\mathcal{T}_n|}$ when $p = 1/2$. This result has appeared several places in the literature, for instance see Section~2 and Corollary 6.6 of \cite{DHS08}.  

\begin{lemma}\label{lem:uniform}
For every $n \in \mathbb{N}$, $T(n,1/2)$ yields the uniform distribution on $\mathcal{T}_n$. \qed
\end{lemma}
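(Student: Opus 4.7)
The plan is to unravel the statement directly from the definition of $T(n,p)$ together with the parametrization of $\mathcal{T}_n$ by isolated-vertex subsets. Recall that every threshold graph in $\mathcal{T}_n$ is of the form $T(n,\sigma)$ for a unique $\sigma \subseteq \{1,\ldots,n\}$, and that the set $\mathcal{T}_n$ therefore has cardinality $2^n$ (this is the Corollary following Proposition~\ref{prop:char}). Hence, to show that the law of $T(n,1/2)$ is the uniform distribution on $\mathcal{T}_n$, it suffices to show that the event $\{T(n,1/2) = T(n,\sigma)\}$ has probability $2^{-n}$ for each fixed $\sigma \subseteq \{1,\ldots,n\}$.

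First I would unpack the definition of $T(n,p)$: membership of each index $i \in \{1,\ldots,n\}$ in the random isolated-vertex set $\sigma$ is an independent Bernoulli$(p)$ trial, so the random subset $\sigma$ follows the product Bernoulli distribution on $2^{[n]}$. Consequently, for any fixed subset $\sigma_0 \subseteq \{1,\ldots,n\}$,
$$\Pr\bigl[T(n,p) = T(n,\sigma_0)\bigr] = p^{|\sigma_0|}(1-p)^{n - |\sigma_0|}.$$
Setting $p = 1/2$ makes this expression equal to $(1/2)^n = 1/|\mathcal{T}_n|$, independent of $\sigma_0$, which is the uniform distribution on $\mathcal{T}_n$.

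Since the lemma is essentially a direct computation, I do not anticipate any real obstacle; the only subtlety worth remarking on is the identification of threshold graphs up to isomorphism with subsets of $\{1,\ldots,n\}$, which we already have in hand from the earlier uniqueness theorem guaranteeing that distinct $\sigma$ give non-isomorphic graphs. I would therefore keep the proof to a single short paragraph citing the product structure of the Bernoulli law on $\sigma$ and the enumeration of $\mathcal{T}_n$.
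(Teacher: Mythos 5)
Your proof is correct. Note, though, that the paper itself offers no argument for this lemma: it is stated as a citation to Diaconis--Holmes--Janson, where random threshold graphs are built from a different (but equivalent) model -- i.i.d.\ vertex weights, or equivalently random binary creation sequences -- and the uniformity statement there requires identifying isomorphism classes of threshold graphs with creation sequences. Under the definition of $T(n,p)$ adopted in this paper, the random object is \emph{already} parametrized by a product-Bernoulli random subset $\sigma \subseteq \{1,\ldots,n\}$, so your computation $\Pr[T(n,p)=T(n,\sigma_0)] = p^{|\sigma_0|}(1-p)^{n-|\sigma_0|} = 2^{-n}$ at $p=1/2$, combined with the theorem that $\sigma \mapsto T(n,\sigma)$ is injective up to isomorphism and the count $|\mathcal{T}_n| = 2^n$, settles the lemma in one line. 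In other words, your argument is a legitimate self-contained proof that makes the lemma independent of the external reference, and correctly flags the only real content: that distinct subsets give non-isomorphic graphs, so the pushforward of the uniform measure on $2^{[n]}$ is the uniform measure on $\mathcal{T}_n$. The citation-based route buys consistency with the broader literature on random threshold graphs (where the weight-based model is standard and other results of \cite{DHS08} are phrased in it); your route buys brevity and self-containment given this paper's definition.
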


For $p = 0$, $T(n,0)$ is necessarily the complete graph on $n+1$ vertices and, for $p = 1$, $T(n,1)$ is necessarily the graph on $n+1$ vertices with no edges.  Since we know the Betti sequence and anti-lecture hall composition associated to each of these graphs, we will restrict our focus to $0 < p < 1$ for most of the results in this section. 

\subsection{Random Betti Sequences}

We first consider the Betti numbers, calculating the expected projective dimension and Betti sequence of $\Bbbk[T(n,p)]$.  

\subsubsection{Expected Projective Dimension}

Recall that the projective dimension of $\Bbbk\left[T\right]$ is the largest index $k$ for which $\beta_k(\Bbbk[T])$ is nonzero.  

\begin{proposition}\label{prop:probdim}
If $1 \leq m \leq n$ and $0 < p < 1$, then $$\mathbb{P}(\dim_{\mathrm{proj}}\Bbbk\left[T(n,p)\right] = m) = p (1-p)^{n-m}.$$
\end{proposition}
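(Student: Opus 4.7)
My plan is to reduce the event $\{\dim_{\mathrm{proj}} \Bbbk[T(n,p)] = m\}$ to an event about the random set $\sigma$ of isolated vertices, and in particular to prove the structural identity $\dim_{\mathrm{proj}} \Bbbk[T(n,\sigma)] = \max \sigma$ whenever $\sigma \neq \emptyset$. Once this identity is in hand, the statement reduces to a short Bernoulli calculation, so all of the real work sits in that identity.

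For the structural identity, I would analyze how the recursive operations $T \mapsto T_\delta$ and $T \mapsto T_\iota$ from Lemma~\ref{lem:betti-shift} affect the last nonzero entry of the Betti sequence. Part~(1) shows that appending a dominating vertex simply appends a $0$, so dominating additions never increase the index of the last nonzero Betti number. Part~(2) shows that the final entry of $\beta(T_\iota)$ equals $\beta_n + \binom{n+1}{n+1} = \beta_n + 1 \geq 1$, so appending an isolated vertex always produces a strictly positive last Betti number. Given $T = T(n,\sigma)$ with $\sigma \neq \emptyset$ and $M = \max \sigma$, I can write $T$ as $T(M,\sigma) = (T(M-1,\sigma \setminus \{M\}))_\iota$ followed by $n-M$ appendings of dominating vertices. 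Iterating Lemma~\ref{lem:betti-shift}(1) yields $\beta(T) = (\beta(T(M,\sigma)),0,\ldots,0)$ with $n-M$ trailing zeros, while the isolated-step observation gives $\beta_M(T(M,\sigma)) \geq 1$. Together these force $\dim_{\mathrm{proj}} \Bbbk[T] = M$, as desired.

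With the identity established, the event $\dim_{\mathrm{proj}} \Bbbk[T(n,p)] = m$ for $m \geq 1$ is exactly $\{\max \sigma = m\}$. Since the indicators $\mathbf{1}\{i \in \sigma\}$ are independent Bernoulli$(p)$ random variables for $i \in \{1,\ldots,n\}$, this event equals $\{m \in \sigma\} \cap \bigcap_{i=m+1}^{n} \{i \notin \sigma\}$, with the membership status of $1,\ldots,m-1$ being unconstrained; by independence, its probability is $p(1-p)^{n-m}$. I do not anticipate any serious obstacle here: the main point is the structural identification $\dim_{\mathrm{proj}} = \max \sigma$, and this follows cleanly from iterating Lemma~\ref{lem:betti-shift} one step at a time.
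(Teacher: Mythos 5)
Your proof is correct and follows the same route as the paper: identify $\dim_{\mathrm{proj}}\Bbbk[T(n,\sigma)]$ with $\max\sigma$ via Lemma~\ref{lem:betti-shift} and then compute the Bernoulli probability $p(1-p)^{n-m}$. The only difference is that you spell out the iteration of the $\delta$ and $\iota$ steps that the paper leaves implicit, which is a welcome but not essentially new addition.
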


\begin{proof}
By Lemma~\ref{lem:betti-shift}, the projective dimension of $\Bbbk[T(n,\sigma)]$ is equal to the largest value $m$ in $\sigma$. The probability that $m \in \sigma$ is $p$, and the probability that each value $i\in\left\{m+1,\dots,n\right\}$ is not in $\sigma$ is $1-p$. Thus, the probability that $m$ is the largest value in $\sigma$ is $p(1-p)^{n-m}$.
\end{proof}

With this, we can give a formula for the expected projective dimension of $\Bbbk[T(n,p)]$.

\begin{proposition}\label{prop:expecteddim}
\label{prop:e(dimproj)}  The expected projected dimension of a random threshold graph, $T(n,p)$, with $0 < p < 1$ is $$\mathbb{E}\left[\dim_{\mathrm{proj}}\left(\Bbbk\left[T\right]\right)\right] = n + \frac{(1-p)^{n+1} - (1-p)}{p}.$$
\end{proposition}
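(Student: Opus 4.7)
The plan is to compute the expectation directly from Proposition~\ref{prop:probdim}, using the standard tail-sum formula $\mathbb{E}[X] = \sum_{m\geq 1} \mathrm{Pr}(X\geq m)$ for nonnegative integer-valued random variables, since this avoids differentiating a geometric series and lets the closed form fall out of one geometric sum.

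First I would note that Proposition~\ref{prop:probdim} gives the probabilities $\mathrm{Pr}(\dim_{\mathrm{proj}}\Bbbk[T(n,p)]=m)=p(1-p)^{n-m}$ for $1\le m\le n$, and that these sum to $1-(1-p)^n$; the remaining mass $(1-p)^n$ sits at $m=0$, corresponding to $\sigma=\emptyset$ (the complete graph, whose coedge ideal is zero). Next I would compute the tail probability: by Proposition~\ref{prop:char} and the proof of Proposition~\ref{prop:probdim}, $\dim_{\mathrm{proj}}\Bbbk[T(n,p)]\ge m$ exactly when at least one of the vertices $m,m+1,\dots,n$ lies in $\sigma$, so
\[
\mathrm{Pr}\bigl(\dim_{\mathrm{proj}}\Bbbk[T(n,p)]\ge m\bigr)=1-(1-p)^{n-m+1}
\]
for $1\le m\le n$.

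The main computation is then the tail sum
\[
\mathbb{E}\bigl[\dim_{\mathrm{proj}}\Bbbk[T(n,p)]\bigr]=\sum_{m=1}^{n}\Bigl(1-(1-p)^{n-m+1}\Bigr)=n-\sum_{j=1}^{n}(1-p)^{j},
\]
after the reindexing $j=n-m+1$. Summing the finite geometric series $\sum_{j=1}^n(1-p)^j=(1-p)\cdot\frac{1-(1-p)^n}{p}=\frac{(1-p)-(1-p)^{n+1}}{p}$ and substituting back yields the claimed identity $n+\frac{(1-p)^{n+1}-(1-p)}{p}$.

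There is essentially no obstacle here: the only subtlety is remembering to account for the $m=0$ atom, which is invisible to the tail-sum formula but would have to be checked explicitly if one instead expanded $\sum_{m=1}^{n}m\,p(1-p)^{n-m}$ directly. Since $p>0$, division by $p$ is legitimate, and the $p\to 0$ and $p\to 1$ behaviors (limits $0$ and $n$, respectively) serve as an easy sanity check.
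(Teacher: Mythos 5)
Your proof is correct, but it takes a different route from the paper. The paper computes $\mathbb{E}[\dim_{\mathrm{proj}}]$ directly as $\sum_{m} m\,p(1-p)^{n-m}$ and evaluates this via an auxiliary identity (Lemma~\ref{lem:comb}, obtained by differentiating the finite geometric series), whereas you use the tail-sum formula $\mathbb{E}[X]=\sum_{m\ge 1}\Pr(X\ge m)$ together with the observation that $\dim_{\mathrm{proj}}\Bbbk[T(n,p)]\ge m$ exactly when some vertex in $\{m,\dots,n\}$ is isolated, giving $\Pr(\dim\ge m)=1-(1-p)^{n-m+1}$ by independence; the expectation then reduces to a single plain geometric sum. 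Your route buys a shorter computation with no need for Lemma~\ref{lem:comb}, and it makes the $m=0$ atom (mass $(1-p)^n$ at $\sigma=\emptyset$) explicit, which the paper glosses over by writing the $m=0$ term with the formula $p(1-p)^{n-m}$ valid only for $m\ge 1$ (harmless, since that term is multiplied by $m=0$). One small attribution slip: the fact that the projective dimension equals the largest element of $\sigma$ comes from Lemma~\ref{lem:betti-shift} (as used in the proof of Proposition~\ref{prop:probdim}), not from Proposition~\ref{prop:char}; since you also invoke the proof of Proposition~\ref{prop:probdim}, this does not affect correctness.
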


\begin{proof}
By Proposition~\ref{prop:probdim}, the probability that $\dim_{\mathrm{proj}}\Bbbk\left[T(n,p)\right] = m$ is $p (1-p)^{n-m}.$ Thus, the expected value for the projective dimension of $\Bbbk\left[T\right]$ is given by
\begin{align*}
\mathbb{E}\left[\dim_{\mathrm{proj}}\left(\Bbbk\left[T\right]\right)\right] &= \sum_{m=0}^n m \cdot \mathbb{P}(\dim_{\mathrm{proj}}\Bbbk\left[T\right] = m) \\[1em]
 &= \sum_{m=0}^{n} m \cdot p (1-p)^{n-m} \\[0.5em]
 &= p(1-p)^{n-1} \sum\limits_{m = 0}^n m \left( \frac{1}{1-p} \right)^{m-1} \\[1em]
 &= p(1-p)^{n-1} \frac{1 - (n+1)\left(\frac{1}{1-p}\right)^n + n\left(\frac{1}{1-p}\right)^{n+1}}{\left(1-\frac{1}{1-p}\right)^2} \\[1em]
 &= \frac{ (1-p)^{n+1} - (n+1)(1-p) + n}{p}  \\[1em]
 &= \frac{(1-p)^{n+1} - (1 - p)}{p} + n
 \end{align*}
 where the fourth equality follows from Lemma~\ref{lem:comb} below by setting $\displaystyle q = \frac{1}{1-p}$.
\end{proof}

\begin{lemma}\label{lem:comb}
For every $n \in \mathbb{N}$ and indeterminate $q$, $$\sum\limits_{m = 0}^n mq^{m-1} = \frac{1 - (n+1)q^n + nq^{n+1}}{(1-q)^2}.$$  
\end{lemma}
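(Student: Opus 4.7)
The plan is to obtain the identity by differentiating the finite geometric series, treating $q$ as a formal indeterminate (so no convergence concerns arise, and the identity in fact holds as a rational function in $q$ away from $q=1$, equivalently as a polynomial identity after clearing denominators).

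First I would start from the well-known closed form
\[
\sum_{m=0}^{n} q^{m} \;=\; \frac{1-q^{n+1}}{1-q},
\]
which is valid as an identity of rational functions. Differentiating the left-hand side term by term gives exactly $\sum_{m=0}^{n} m\, q^{m-1}$ (the $m=0$ term contributes $0$, so it is harmless to keep it in the range of summation). Then I would differentiate the right-hand side using the quotient rule:
\[
\frac{d}{dq}\!\left[\frac{1-q^{n+1}}{1-q}\right]
\;=\; \frac{-(n+1)q^{n}(1-q) + (1-q^{n+1})}{(1-q)^{2}}.
\]
Expanding the numerator yields $1 - (n+1)q^{n} + n q^{n+1}$, which matches the claimed right-hand side.

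The only potential subtlety is justifying term-by-term differentiation, but since both sides of the geometric series identity are rational functions (or polynomials after multiplying by $1-q$), differentiation is purely formal and does not require any analytic hypothesis. As a backup approach, one could prove the identity by induction on $n$: the base case $n=0$ gives $0$ on both sides, and the inductive step reduces to verifying the polynomial identity
\[
\bigl(1-(n+1)q^{n}+nq^{n+1}\bigr) + (n+1)q^{n}(1-q)^{2} \;=\; 1-(n+2)q^{n+1}+(n+1)q^{n+2},
\]
which is a routine expansion. I do not anticipate any real obstacle here; the differentiation argument is essentially the whole proof.
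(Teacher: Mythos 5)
Your proof is correct and is essentially the paper's own argument: the paper also differentiates the finite geometric series $\sum_{m=0}^n q^m = \frac{1-q^{n+1}}{1-q}$ and applies the quotient rule to obtain the stated closed form. No issues to report.
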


\begin{proof}
Let $n \in \mathbb{N}$, then $$ \sum\limits_{m = 0}^n mq^{m-1} = \frac{d}{dq} \left ( \sum\limits_{m = 0}^n q^m \right) = \frac{d}{dq} \left (\frac{1 - q^{n+1}}{1-q} \right) = \frac{1 - (n+1)q^n + nq^{n+1}}{(1-q)^2}.$$
\end{proof}

An immediate consequence of Lemma~\ref{lem:uniform} and Proposition~\ref{prop:expecteddim} is a simple formula for the expected projective dimension of the quotient ring of a threshold graph sampled uniformly at random.

\begin{corollary}
The expected projective dimension of $\Bbbk\left[T(n,1/2)\right]$
is $$\mathbb{E}\left[\dim_{\mathrm{proj}}\left(\Bbbk\left[T(n,1/2)\right]\right)\right] = n - 1 + \left(1/2\right)^{n}.$$
\end{corollary}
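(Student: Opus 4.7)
The plan is to derive this formula as a direct specialization of Proposition~\ref{prop:expecteddim}. By Lemma~\ref{lem:uniform}, the random threshold graph $T(n,1/2)$ is distributed uniformly on $\mathcal{T}_n$, so its expected projective dimension is precisely the value given by the general formula at $p = 1/2$. Hence the entire proof reduces to substituting $p = 1/2$ into
\[
\mathbb{E}\left[\dim_{\mathrm{proj}}\left(\Bbbk\left[T\right]\right)\right] = n + \frac{(1-p)^{n+1} - (1-p)}{p}
\]
and simplifying.

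Carrying out that substitution, I would write
\[
\mathbb{E}\left[\dim_{\mathrm{proj}}\left(\Bbbk\left[T(n,1/2)\right]\right)\right] = n + \frac{(1/2)^{n+1} - 1/2}{1/2} = n + 2\bigl((1/2)^{n+1} - 1/2\bigr) = n - 1 + (1/2)^{n},
\]
which is exactly the stated expression. Since both Proposition~\ref{prop:expecteddim} and Lemma~\ref{lem:uniform} are already established, there is no genuine obstacle here; the corollary is purely an arithmetic specialization, and in fact the proof is naturally a single displayed equation preceded by a sentence invoking the two prior results. The only minor care needed is to observe that $p = 1/2$ lies in the open interval $(0,1)$ on which Proposition~\ref{prop:expecteddim} is stated, so the formula applies without modification.
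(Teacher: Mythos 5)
Your proof is correct and matches the paper's intended argument exactly: the corollary is presented as an immediate consequence of Proposition~\ref{prop:expecteddim} (with Lemma~\ref{lem:uniform} supplying the uniform-distribution interpretation), and the substitution $p=1/2$ with the simplification $n + 2\bigl((1/2)^{n+1} - 1/2\bigr) = n - 1 + (1/2)^{n}$ is precisely what is needed.
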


\subsubsection{Expected Betti Numbers}

Next, we compute the expected Betti sequence associated to $T(n,p)$.  To simplify notation, let $B_k(n,p) = \mathbb{E}(\beta_k(T(n,p))$.  We begin by establishing a recurrence on $B_k(n,p)$.  

\begin{proposition}\label{prop:newbetti}
If we set $B_k(n,p) = 0$ for $k = 0$ and $k > n$, then the triangle of numbers $B_k(n,p)$ satisfies the recurrence  $$B_k(n,p) = B_k(n-1,p) + p \cdot B_{k-1}(n-1,p) + p \cdot \binom{n}{k}$$ for $1 \leq k \leq n$.
\end{proposition}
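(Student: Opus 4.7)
The plan is to condition on whether vertex $n$ is isolated in the random threshold graph $T(n,p)$ and then invoke Lemma~\ref{lem:betti-shift} together with linearity of expectation. By the definition of $T(n,p)$, each $i \in \{1,\ldots,n\}$ lies in $\sigma$ independently with probability $p$, so the event $\{n \in \sigma\}$ is independent of $\sigma \cap \{1,\ldots,n-1\}$, and the latter is distributed exactly as the isolated set of $T(n-1,p)$. Consequently, $T(n,p)$ is equidistributed with $T(n-1,p)_\delta$ with probability $1-p$ and with $T(n-1,p)_\iota$ with probability $p$.

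Next, I would expand each conditional Betti number using Lemma~\ref{lem:betti-shift}, applied to $T = T(n-1,p)$, which has $n$ vertices. Reading off the coordinates gives $\beta_k(T_\delta) = \beta_k(T)$ for $1 \leq k \leq n-1$ and $\beta_n(T_\delta) = 0$, while $\beta_k(T_\iota) = \beta_k(T) + \beta_{k-1}(T) + \binom{n}{k}$ for $1 \leq k \leq n$, adopting the conventions $\beta_0(T) = 0$ and $\beta_n(T) = 0$ that are consistent with the hypothesis that $B_k(m,p) = 0$ for $k = 0$ or $k > m$. Applying the law of total expectation and linearity then yields
$$B_k(n,p) = (1-p)\, B_k(n-1,p) + p \left( B_k(n-1,p) + B_{k-1}(n-1,p) + \binom{n}{k} \right),$$
which simplifies at once to the claimed recurrence $B_k(n,p) = B_k(n-1,p) + p\, B_{k-1}(n-1,p) + p\binom{n}{k}$.

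This is essentially a three-line argument, so there is no substantive obstacle. The only thing to watch carefully is the shift in indexing between Lemma~\ref{lem:betti-shift} (stated for $T$ on $n+1$ vertices) and the present statement (where one takes $T = T(n-1,p)$ on $n$ vertices), together with aligning the boundary conventions $\beta_0(T) = 0$ and $\beta_n(T) = 0$ with the vanishing of $B_k$ outside $1 \leq k \leq n$. Beyond this bookkeeping, the claim follows immediately from the conditional decomposition and linearity of expectation.
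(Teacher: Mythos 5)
Your proof is correct and follows essentially the same route as the paper: condition on whether the new vertex is isolated (probability $p$) or dominating (probability $1-p$), apply Lemma~\ref{lem:betti-shift} to $T(n-1,p)$, and combine via linearity of expectation. Your version is slightly more careful about the independence of $\{n \in \sigma\}$ from the rest of $\sigma$ and about the boundary conventions, but the substance is identical.
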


\begin{proof}
This follows from Lemma~\ref{lem:betti-shift} since every element of $\mathcal{B}_n$ arises from an element of $\mathcal{B}_{n-1}$ via one of the $\delta$ or $\iota$ operations.  We know that elements of $\mathcal{B}_n$ arising from $\delta$ correspond to the same compositions with $0$ appended to their ends and that each will occur with probability $1-p$.  Thus, these contribute $(1-p) \cdot B_k(n-1,p)$ to $B_k(n,p)$.  The remaining elements of $\mathcal{B}_n$ arise from applying $\iota$ to each element of $\mathcal{B}_{n-1}$.  This corresponds to adding a shifted copy of the sequence to itself along with a sequence of binomial coefficients, and occurs with probability $p$ in each case.  Together they contribute $$p \cdot \left(B_k(n-1,p) + B_{k-1}(n-1,p) + \binom{n}{k}\right)$$ to $B_k(n,p)$.  The result follows from combining like terms.
\end{proof}

With this, we can give a formula for the expected Betti numbers of $\Bbbk[T(n,p)]$.  

\begin{theorem}\label{thm:newbetti}
For every $1 \leq k \leq n$ and $0 < p < 1$, $$\mathbb{E}(\beta_k(\Bbbk[T(n,p)])) = \binom{n+1}{k+1} \cdot \frac{p(1-p^k)}{1-p}.$$ 
\end{theorem}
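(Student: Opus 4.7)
The plan is to prove the closed form by induction on $n$, using the recurrence established in Proposition~\ref{prop:newbetti}. Set
$$F(n,k) = \binom{n+1}{k+1}\cdot\frac{p(1-p^k)}{1-p}$$
for $1 \le k \le n$, and extend by $F(n,0) = 0$ and $F(n,k) = 0$ when $k > n$ (the latter is automatic from $\binom{n+1}{k+1} = 0$). It suffices to show that $F$ satisfies the same recurrence and boundary values as $B_k(n,p)$.

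First I would dispose of the base case $n=1$: the only nontrivial value is $B_1(1,p)$, which from the recurrence equals $p\binom{1}{1} = p$, matching $F(1,1) = \binom{2}{2}\cdot p(1-p)/(1-p) = p$. The boundary conditions $F(n,0)=0$ and $F(n,k)=0$ for $k>n$ agree with the conventions chosen for $B_k(n,p)$.

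For the inductive step, assume $B_j(n-1,p) = F(n-1,j)$ for all $j$. Plug this into the recurrence and compute
\begin{align*}
F(n-1,k) + pF(n-1,k-1) + p\binom{n}{k} &= \frac{p}{1-p}\Bigl[\binom{n}{k+1}(1-p^k) + p\binom{n}{k}(1-p^{k-1})\Bigr] + p\binom{n}{k} \\
&= \frac{p}{1-p}\Bigl[\binom{n}{k+1}(1-p^k) + \binom{n}{k}(p - p^k) + \binom{n}{k}(1-p)\Bigr] \\
&= \frac{p}{1-p}\Bigl[\binom{n}{k+1}(1-p^k) + \binom{n}{k}(1-p^k)\Bigr].
\end{align*}
After factoring the common $(1-p^k)$ and applying Pascal's identity $\binom{n}{k+1} + \binom{n}{k} = \binom{n+1}{k+1}$, the right-hand side collapses to $F(n,k)$, completing the induction.

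The only mildly delicate point is bookkeeping at the extremes of $k$: when $k=n$ the term $F(n-1,k)$ vanishes, and when $k=1$ the term $F(n-1,k-1)$ vanishes, so one should check that each collapse is compatible with the algebra above (it is, since the $\binom{\cdot}{\cdot}$ factors kill the dropped terms). Beyond that, no real obstacle arises; the proof is a one-line induction once the candidate formula is in hand, and the authors presumably obtained that candidate either by generating-function manipulation of the recurrence or by pattern-matching against the tabulated values in Table~\ref{tab:betti-alhc}.
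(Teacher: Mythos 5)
Your proposal is correct and is essentially the paper's own argument: verify that the closed form vanishes at the boundary cases and satisfies the recurrence of Proposition~\ref{prop:newbetti}, with the identical algebraic collapse via factoring $(1-p^k)$ and Pascal's identity $\binom{n}{k+1}+\binom{n}{k}=\binom{n+1}{k+1}$. The extra bookkeeping you note at $k=1$ and $k=n$ is harmless and consistent with the paper's observation that the formula gives $0$ for $k=0$ and $k>n$.
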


\begin{proof}
We show that this formula satisfies the recurrence in Proposition~\ref{prop:newbetti}.  First, observe that the formula yields $0$ whenever $k = 0$ or $k > n$.  Next, suppose that the formula holds for $n-1$ and check that \begin{align*} B_k(n,p) &= B_k(n-1,p) + p \cdot B_{k-1}(n-1,p) + p \cdot \binom{n}{k} \\[1em] &= \binom{n}{k+1} \cdot \frac{p(1-p^k)}{1-p} + p \cdot \binom{n}{k} \cdot \frac{p(1-p^{k-1})}{1-p} + p \cdot \binom{n}{k} \\[1em] &= \binom{n}{k+1} \cdot \frac{p(1-p^k)}{1-p} + \binom{n}{k} \cdot \frac{p(1-p^k)}{1-p} \\[1em] &= \binom{n+1}{k+1} \cdot \frac{p(1-p^k)}{1-p}. \end{align*} This completes the proof.
\end{proof}

The expected Betti sequences for quotients by $2$-linear ideals sampled uniformly at random for $1 \leq n \leq 4$ are listed in Table~\ref{tab:expected-betti}.

\begin{table}
\renewcommand{\arraystretch}{1.2}
\setlength{\tabcolsep}{6pt}
\begin{center}
\begin{tabular}{c||c|c|c|c}
$n$ & $1$ & $2$ & $3$ & $4$ \bigstrut \\ 
\hline
$\mathbb{E}(\beta(T(n,1/2)))$ & $\left(\frac{1}{2}\right)$ & $\left(\frac{3}{2},\frac{3}{4}\right)$ & $\left(3,3,\frac{7}{8}\right)$ & $\left(5,\frac{15}{2},\frac{35}{8},\frac{15}{16}\right)$ \bigstrut \\
\end{tabular}
\end{center}
\caption{The expected Betti sequences with $p = 1/2$ for $1 \leq n \leq 4$.}\label{tab:expected-betti} 
\end{table}

\begin{remark}
The more knowledgeable reader will know that the alternating sum of the Betti numbers is always $0$.  Indeed, the expected alternating sum of the expected Betti numbers from Theorem~\ref{thm:newbetti} is $0$ with the additional observation that the expected value of $\beta_0(\Bbbk[T(n,p)])$ is $1-1/p^n$. 
\end{remark}

\subsection{Random Anti-Lecture Hall Compositions}

Last, but not least, we calculate the expected anti-lecture hall composition associated to $T(n,p)$.  To simplify notation, let $\Lambda_k(n,p) = \mathbb{E}(\lambda_k(T(n,p))$. As we did in the previous section, we begin by establishing a recurrence on $\Lambda_k(n,p)$.  

\begin{proposition}\label{prop:expalhc}
If we set $\Lambda_k(n,p) = 0$ for $k = 0$ and $k > n$, then the triangle of numbers $\Lambda_k(n,p)$ satisfies the recurrence $$\Lambda_k(n,p) = (1-p) \cdot \Lambda_k(n-1,p) + p \cdot \Lambda_{k-1}(n-1,p) + p$$ for $1 \leq k \leq n$.
\end{proposition}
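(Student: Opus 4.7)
My plan is to imitate the proof of Proposition~\ref{prop:newbetti} verbatim, just replacing the Betti-number recurrence of Lemma~\ref{lem:betti-shift} with the anti-lecture hall analog in Lemma~\ref{lem:alhc-shift}. The key structural observation is that the random graph $T(n,p)$ decomposes cleanly according to the status of its largest-indexed vertex $n$: with probability $1-p$ the vertex $n$ is dominating and $T(n,p) \stackrel{d}{=} (T')_\delta$, while with probability $p$ the vertex $n$ is isolated and $T(n,p) \stackrel{d}{=} (T')_\iota$, where in both cases $T'$ has the distribution $T(n-1,p)$. Crucially, the coin flip deciding the status of vertex $n$ is independent of the rest of $\sigma$, so this really is a genuine conditional decomposition to which we can apply linearity of expectation.

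Now I apply Lemma~\ref{lem:alhc-shift} to each case. Part (1) gives $\lambda_k((T')_\delta) = \lambda_k(T')$ for $1 \leq k \leq n-1$ and $\lambda_n((T')_\delta) = 0$; part (2) gives $\lambda_1((T')_\iota) = 1$ and $\lambda_k((T')_\iota) = 1 + \lambda_{k-1}(T')$ for $2 \leq k \leq n$. Taking expectations and combining the two cases with their respective probabilities yields, for $1 \leq k \leq n$,
\[
\Lambda_k(n,p) = (1-p)\,\mathbb{E}[\lambda_k((T')_\delta)] + p\,\mathbb{E}[\lambda_k((T')_\iota)],
\]
which after substitution becomes
\[
\Lambda_k(n,p) = (1-p)\,\Lambda_k(n-1,p) + p\,\Lambda_{k-1}(n-1,p) + p,
\]
exactly as claimed.

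The only thing that needs a little care is the boundary behavior, which I would dispatch by confirming the two edge values of $k$. For $k=1$, the isolated case contributes $p \cdot 1$, which agrees with $p \cdot \Lambda_0(n-1,p) + p = 0 + p$ under the convention $\Lambda_0(n-1,p)=0$. For $k = n$, the dominating case contributes $(1-p) \cdot 0$, which agrees with $(1-p) \cdot \Lambda_n(n-1,p)$ under the convention $\Lambda_n(n-1,p)=0$. Both boundaries thus absorb into the single clean recurrence, and there is no remaining obstacle --- this is really the easiest step of the argument, as everything of substance has already been done in Lemma~\ref{lem:alhc-shift}.
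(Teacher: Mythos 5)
Your proof is correct and follows essentially the same route as the paper: condition on whether the last vertex is dominating (probability $1-p$) or isolated (probability $p$), apply Lemma~\ref{lem:alhc-shift} in each case, and combine by linearity of expectation. Your explicit check of the boundary cases $k=1$ and $k=n$ is a slight refinement of the paper's argument, not a different method.
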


\begin{proof}
This follows from Lemma~\ref{lem:alhc-shift} since every element of $\mathcal{A}_n$ arises from an element of $\mathcal{A}_{n-1}$ via one of the $\delta$ or $\iota$ operations.  We know that elements of $\mathcal{A}_n$ arising from $\delta$ correspond to the same compositions with $0$ appended to their ends and that each will occur with probability $1-p$.  Thus, these contribute $(1-p) \cdot \Lambda_k(n-1,p)$ to $\Lambda_k(n,p)$.  The remaining elements of $\mathcal{A}_n$ arise from applying $\iota$ to each element of $\mathcal{A}_{n-1}$.  This corresponds to shifting each sequence to the right and adding $1$ to each entry and occurs with probability $p$ in each case.  Thus, they contribute $p \cdot (\Lambda_{k-1}(n-1,p) + 1)$ to $\Lambda_k(n,p)$.
\end{proof}

An immediate consequence of Lemma~\ref{lem:uniform} and Proposition~\ref{prop:expalhc} is the surprising result that the expected values of an anti-lecture hall composition bounded above by $1$ are symmetric.

\begin{corollary}
The recurrence in Proposition~\ref{prop:expalhc} becomes $$\Lambda_k(n,1/2) = \frac{1}{2} \big(\Lambda_k(n-1,1/2) +  \Lambda_{k-1}(n-1,1/2) + 1 \big)$$ with $p = 1/2$. \qed  
\end{corollary}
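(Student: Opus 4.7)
The plan is to substitute $p = 1/2$ directly into the recurrence from Proposition~\ref{prop:expalhc}. Under this substitution, the two multiplicative weights $(1-p)$ and $p$ governing the contributions from $\Lambda_k(n-1,p)$ and $\Lambda_{k-1}(n-1,p)$ both collapse to $1/2$, and the additive constant $p$ likewise becomes $1/2$. Factoring out a common $1/2$ from the resulting three terms on the right-hand side then yields
$$\Lambda_k(n,1/2) = \tfrac{1}{2}\bigl(\Lambda_k(n-1,1/2) + \Lambda_{k-1}(n-1,1/2) + 1\bigr),$$
which is exactly the stated identity.

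The only genuinely conceptual ingredient beyond algebra is the appeal to Lemma~\ref{lem:uniform}, which guarantees that $T(n,1/2)$ is precisely the uniform distribution on $\mathcal{T}_n$. This ensures that the quantity $\Lambda_k(n,1/2)$ is indeed the expected $k$-th coordinate of a uniformly random anti-lecture hall composition in $\mathcal{A}_n$ (under the bijection of Theorem~\ref{thm:correspondence}), so that the specialization of the recurrence is genuinely describing the object promised in the statement. There is no substantive obstacle here: the corollary is a one-line specialization of a previously established recurrence, and the content of interest, highlighted in the surrounding prose, is the \emph{symmetry} that emerges at $p=1/2$, where the two recursive contributions $\Lambda_k(n-1,1/2)$ and $\Lambda_{k-1}(n-1,1/2)$ enter with equal weight.
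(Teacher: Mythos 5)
Your proposal is correct and matches the paper's treatment: the corollary is stated with no proof beyond a \qed precisely because it is the one-line specialization $p=1/2$ of the recurrence in Proposition~\ref{prop:expalhc}, which is exactly what you carry out. Your added remark invoking Lemma~\ref{lem:uniform} to identify $\Lambda_k(n,1/2)$ with the uniform-distribution expectation is also the same framing the paper uses in the sentence introducing the corollary.
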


The expected anti-lecture hall compositions bounded above by $1$ sampled uniformly at random for $1 \leq n \leq 4$ are listed in Table~\ref{tab:expalhc}.

\begin{table}
\renewcommand{\arraystretch}{1.2}
\setlength{\tabcolsep}{6pt}
\begin{center}
\begin{tabular}{c||c|c|c|c}
$n$ & 1 & 2 & 3 & 4 \\
\hline 
$\mathbb{E}(\lambda(T(n,1/2)))$ & $\left(\frac{1}{2}\right)$ & $\left( \frac{3}{4}, \frac{3}{4} \right)$ & $\left(\frac{7}{8},\frac{10}{8},\frac{7}{8}\right)$ & $\left( \frac{15}{16}, \frac{25}{16}, \frac{25}{16}, \frac{15}{16} \right)$ \\
\end{tabular}
\medskip
\caption{The expected anti-lecture hall compositions with $p = 1/2$ for $1 \leq n \leq 4$.}\label{tab:expalhc}
\end{center}
\end{table}

Finally, we are able to give a formula for the expected anti-lecture hall composition associated to $T(n,p)$.  

\begin{theorem}\label{thm:expalhc}
For every $1 \leq k \leq n$ and $0 < p < 1$, $$\mathbb{E}(\lambda_k(T(n,p))) = \sum\limits_{i = k}^{n} (-1)^{i+k} \binom{n+1}{i+1} \binom{i-1}{k-1} \frac{p(1-p^i)}{1-p}.$$
\end{theorem}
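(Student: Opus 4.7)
The plan is to derive the formula directly from two results already in hand rather than redo an inductive argument from scratch. Theorem~\ref{thm:transform} provides a deterministic linear identity, valid for every threshold graph $T \in \mathcal{T}_n$:
\[
\lambda_k(T) \;=\; \sum_{i=k}^{n}(-1)^{i+k}\binom{i-1}{k-1}\,\beta_i(T).
\]
Because this identity is pointwise on $\mathcal{T}_n$, it is preserved under expectation with respect to any distribution, and in particular with respect to the distribution defining $T(n,p)$.

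Applying linearity of expectation gives
\[
\mathbb{E}\bigl(\lambda_k(T(n,p))\bigr) \;=\; \sum_{i=k}^{n}(-1)^{i+k}\binom{i-1}{k-1}\,\mathbb{E}\bigl(\beta_i(T(n,p))\bigr).
\]
I would then substitute the closed form $\mathbb{E}(\beta_i(T(n,p))) = \binom{n+1}{i+1}\frac{p(1-p^i)}{1-p}$ provided by Theorem~\ref{thm:newbetti}, which applies since we are assuming $0 < p < 1$. This substitution produces exactly the claimed expression.

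I anticipate no real obstacle: the deterministic transform in Theorem~\ref{thm:transform} does all the combinatorial work upfront, so the probabilistic statement becomes a one-line consequence of linearity plus the already-established Betti expectation formula. As a sanity check independent of this approach, one could alternatively induct on $n$ by verifying that the proposed expression satisfies the recurrence $\Lambda_k(n,p) = (1-p)\Lambda_k(n-1,p) + p\,\Lambda_{k-1}(n-1,p) + p$ of Proposition~\ref{prop:expalhc}; the only nontrivial point there would be the constant $+p$, whose matching reduces to the binomial identity of Lemma~\ref{lem:binom} after reindexing. The linearity argument is cleaner, so that is what I would write up.
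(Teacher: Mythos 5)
Your proposal is correct and is essentially the paper's own proof: the paper simply notes that the result "follows directly from Theorems~\ref{thm:newbetti} and \ref{thm:transform}," which is exactly your linearity-of-expectation argument, and it likewise mentions the alternative check against the recurrence in Proposition~\ref{prop:expalhc}. Your write-up just makes the one-line argument explicit.
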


\begin{proof}
This follows directly from Theorems \ref{thm:newbetti} and \ref{thm:transform}, but it is also straightforward to show that this formula satisfies the recurrence in Proposition~\ref{prop:expalhc}.  
\end{proof}

We can also prove the following formula for the expected values of an anti-lecture hall composition without an alternating sum, although this new formula involves a double sum.  

\begin{theorem}\label{thm:nonnegative-alhc}
\label{prop:formula}The expected entries of the anti-lecture hall composition associated to $T(n,p)$ are given by $$\mathbb{E}\left(\lambda_k\left(n,p\right)\right) = \sum_{1\leq i\leq m\leq j\leq n}\binom{j-i}{m-i} \cdot p^{\ j-m+1} \cdot (1-p)^{m-i}$$ where $m = n-k+1$.  
\end{theorem}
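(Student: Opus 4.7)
The plan is to derive the formula directly from the explicit bijection between threshold graphs and anti-lecture hall compositions provided by Theorem~\ref{thm:threshold-to-alhc}, rather than manipulating the alternating sum in Theorem~\ref{thm:expalhc}. First, I would use the characterization in Proposition~\ref{prop:char} to observe that for $T = T(n,\sigma)$ and a pair of vertices $0 \leq u < v \leq n$, the edge $uv$ is missing from $T$ if and only if $v \in \sigma$ (since $v \geq 1$ is either dominating or isolated). Combining this with Theorem~\ref{thm:threshold-to-alhc} expresses $\lambda_k(T)$ as
$$\lambda_k(T(n,\sigma)) = \sum_{0 \leq u < v \leq n} \mathbf{1}\bigl[v \in \sigma \text{ and } u + 1 + i_T(v) = k\bigr].$$
Taking expectations over $T(n,p)$ and using linearity reduces the computation to evaluating a single probability for each ordered pair $(u,v)$.

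Next, I would compute that probability by exploiting the independence of the Bernoulli$(p)$ events defining $\sigma$. The event $v \in \sigma$ depends only on the index $v$, while $i_T(v) = k - u - 1$ depends only on which elements of $\{v+1, \ldots, n\}$ lie in $\sigma$, so the two events are independent and the joint probability equals
$$p \cdot \binom{n-v}{k-u-1}\, p^{k-u-1}(1-p)^{n-v-k+u+1}.$$
This presents $\mathbb{E}(\lambda_k(T(n,p)))$ as an explicit double sum over $(u,v)$; note that the binomial coefficient automatically kills any terms violating $0 \leq k-u-1 \leq n-v$.

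The remaining task is a change of variables. I would set $b = k - u - 1$ (the value of $i_T(v)$) and $a = n - v - b$ (the number of indices above $v$ not in $\sigma$), so that the summand becomes $\binom{a+b}{a}\, p^{b+1}(1-p)^a$; the constraints $u \geq 0$, $u < v$, and $v \leq n$ translate cleanly to $0 \leq b \leq k-1$ and $0 \leq a \leq m-1$, where $m = n-k+1$. A second substitution $i = m - a$ and $j = m + b$ then transports the sum to the index set $1 \leq i \leq m \leq j \leq n$ with summand $\binom{j-i}{m-i}\, p^{j-m+1}(1-p)^{m-i}$, which is precisely the formula claimed.

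The main obstacle will be bookkeeping: verifying that the summation ranges transform correctly through both changes of variables, and that the binomial coefficient silently enforces the dropped positivity constraints on the original indices. An alternative would be to verify the double sum directly against the recurrence in Proposition~\ref{prop:expalhc}, but the combinatorial derivation above is preferable because it explains where each factor in the double sum comes from, and because the intermediate form $\binom{a+b}{a}\, p^{b+1}(1-p)^a$ arises naturally from counting the relevant configurations of $\sigma$.
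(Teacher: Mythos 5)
Your proof is correct, but it takes a genuinely different route from the paper. The paper proves Theorem~\ref{thm:nonnegative-alhc} by verifying that the double sum satisfies the recurrence of Proposition~\ref{prop:expalhc} (a shift-and-Pascal manipulation of the sum, i.e.\ an inductive verification that never explains the summand). You instead derive the formula directly from Theorem~\ref{thm:threshold-to-alhc}: since for $0 \leq u < v \leq n$ the pair $uv$ is a non-edge of $T(n,\sigma)$ exactly when $v \in \sigma$, linearity of expectation gives $\mathbb{E}(\lambda_k) = \sum_{u<v} P\bigl(v \in \sigma,\ i_T(v) = k-u-1\bigr)$, and the independence of the coordinates of $\sigma$ (the standard model, used implicitly in Proposition~\ref{prop:probdim} as well) yields the summand $p\binom{n-v}{k-u-1}p^{k-u-1}(1-p)^{n-v-k+u+1}$. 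I checked your changes of variables: with $b = k-u-1$ and $a = n-v-b$, the constraint $u \geq 0$ gives $b \leq k-1$, the constraint $u < v$ gives $a \leq m-1$, and nonnegativity of $a,b$ is enforced by the binomial coefficient, so the index set is exactly $0 \leq a \leq m-1$, $0 \leq b \leq k-1$; the substitution $i = m-a$, $j = m+b$ then lands precisely on $1 \leq i \leq m \leq j \leq n$ with summand $\binom{j-i}{m-i}p^{j-m+1}(1-p)^{m-i}$ (and small cases such as $n=2$, $p=1/2$ agree with Table~\ref{tab:expalhc}). What your approach buys: it needs no induction or base-case bookkeeping, it explains each factor of the summand combinatorially ($b$ isolated and $a$ non-isolated vertices above $v$, plus $v$ itself), and it makes the lattice-path weight $\binom{a+b}{a}p^{b+1}(1-p)^a$ in the closing Remark appear for a structural reason rather than as an afterthought; what the paper's verification buys is brevity given that Proposition~\ref{prop:expalhc} is already on record.
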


\begin{proof}
We prove that this formula satisfies the recurrence in Proposition~\ref{prop:expalhc}.  Let $$\Lambda_k(n,p) = \sum_{1\leq i\leq m\leq j\leq n}\binom{j-i}{m-i} \cdot p^{j-m+1}q^{m-i}$$ where $m = n-k+1$ and $q = 1-p$.  Then,   
\begin{align*} 
\Lambda_k(n,p) 
&= \sum_{\substack{1\leq i \leq m \\ m \leq j \leq n}}\binom{j-i}{m-i}  p^{j-m+1}  q^{m-i} \\[1em]
&=  \sum_{1\leq i \leq m} p  q^{m-i} + \sum_{\substack{1\leq i < m \\ m < j \leq n}}\binom{j-i}{m-i}  p^{j-m+1}  q^{m-i} + \sum_{m < j \leq n} p^{j-m+1}  \\[1em] 
&= p + \sum_{1\leq i < m} p  q^{m-i} + \sum_{\substack{1\leq i < m \\ m < j \leq n}}\left(\binom{j-i-1}{m-i-1} + \binom{j-i-1}{m-i}\right)  p^{j-m+1}  q^{m-i} + \sum_{m < j \leq n} p^{j-m+1}  \\[1em]
&= p + \sum_{\substack{1\leq i < m \\ m \leq j \leq n}}\binom{j-i-1}{m-i-1} p^{j-m+1}  q^{m-i} + \sum_{\substack{1\leq i \leq m \\ m < j \leq n}}\binom{j-i-1}{m-i}  p^{j-m+1}  q^{m-i} \\[1em]
&= p + \sum_{\substack{1\leq i \leq m-1 \\ m-1 \leq j \leq n-1}}\binom{j-i}{m-i-1} p^{j-m+2}  q^{m-i} + \sum_{\substack{1\leq i \leq m \\ m \leq j \leq n-1}}\binom{j-i}{m-i}  p^{j-m+2}  q^{m-i} \\[1em]
&= p + \Lambda_{k}(n-1,p) + \Lambda_{k-1}(n-1,p).
\end{align*}
This completes the proof.
\end{proof}

\begin{remark}
The formula in Theorem~\ref{thm:nonnegative-alhc} suggests a connection to up/right paths in a lattice rectangle. This raises a natural question about bijections between anti-lecture hall compositions and weighted lattice paths in addition to the many bijections mentioned in \cite{S16}.
\end{remark}

\section{Future Directions}

We hope the results in this paper may contribute to the two larger research programs described below.  

\subsection{Probabilistic Commutative Algebra}

A number of papers exploring probabilistic approaches to commutative algebra have emerged over the past two years.  De Loera, Petrovic, Silverstein, Stasi, and Wilburne \cite{DPSSW} and De Loera, Ho\c{s}ten, Krone, and Silverstein \cite{DHKS} studied the expected homological algebraic properties monomial ideals defined by random generating sets and Erman and Yang \cite{EY18} studied distributions of Betti numbers along fixed rows of Betti tables of Stanley-Reisner ideals of random flag complexes generated using the Erd\H{o}s-R\'enyi random graph model. It would be interesting to investigate the similarities and differences between each of these approaches.  

\subsection{Combinatorics of $d$-Linear Resolutions}

It follows from the work of Herzog, Sharifan, and Varbaro \cite{HSV14} that the Stanley-Reisner ideal of every $(d-1)$-dimensional shifted simplicial complex has a $d$-linear resolution.  Since $1$-dimensional shifted simplicial complexes and threshold graphs are one in the same, it is natural to ask if every Betti table from a  $d$-linear resolution arises from the Stanley-Reisner ideal of a shifted simplicial complex (which is true for $d = 2$ by Theorem~\ref{thm:correspondence}).  While an analogous setup for the proof of Theorem~\ref{thm:correspondence} is plausible for arbitrary dimensions, the combinatorics of shifted simplicial complexes is significantly more complicated when the dimension is greater than one, particularly since those complexes no longer have nice recursive characterizations like the one with dominating and isolated vertices for threshold graphs.  Moreover, one would need to identify an appropriate family of generalized anti-lecture hall compositions corresponding to shifted simplicial complexes, but it is not obvious how to do this.  For more information on generalized anti-lecture hall compositions, see \cite{S16}. Our hope is that the explicit combinatorial mappings in Section~\ref{sec:mappings} may be useful for overcoming these obstacles to generalize Theorem~\ref{thm:correspondence} to higher dimensions.



\bibliographystyle{plain}
\bibliography{references}

\begin{thebibliography}{10}

\bibitem{BS1}
Mats Boij and Jonas S\"oderberg.
\newblock Graded {B}etti numbers of {C}ohen-{M}acaulay modules and the
  multiplicity conjecture.
\newblock {\em J. Lond. Math. Soc. (2)}, 78(1):85--106, 2008.

\bibitem{BS2}
Mats Boij and Jonas S\"oderberg.
\newblock Betti numbers of graded modules and the multiplicity conjecture in
  the non-{C}ohen-{M}acaulay case.
\newblock {\em Algebra Number Theory}, 6(3):437--454, 2012.

\bibitem{Bollobas}
B\'ela Bollob\'as.
\newblock {\em Modern graph theory}, volume 184 of {\em Graduate Texts in
  Mathematics}.
\newblock Springer-Verlag, New York, 1998.

\bibitem{BE1}
Mireille Bousquet-M\'elou and Kimmo Eriksson.
\newblock Lecture hall partitions.
\newblock {\em Ramanujan J.}, 1(1):101--111, 1997.

\bibitem{BE2}
Mireille Bousquet-M\'elou and Kimmo Eriksson.
\newblock Lecture hall partitions. {II}.
\newblock {\em Ramanujan J.}, 1(2):165--185, 1997.

\bibitem{CH77}
V\'aclav Chv\'atal and Peter~L. Hammer.
\newblock Aggregation of inequalities in integer programming.
\newblock {\em Ann. of Discrete Math., Vol. 1}, pages 145--162, 1977.

\bibitem{CH73}
V\'aclav Chv\'atal and Peter~L. Hammer.
\newblock Set-packing problems and threshold graphs.
\newblock {\em Technical Report CORR}, pages 13--21, August 1973.

\bibitem{Cook}
David Cook, II.
\newblock On decomposing {B}etti tables and {$O$}-sequences.
\newblock {\em Comm. Algebra}, 44(9):3939--3950, 2016.

\bibitem{CLS}
Sylvie Corteel, Sunyoung Lee, and Carla~D. Savage.
\newblock Enumeration of sequences constrained by the ratio of consecutive
  parts.
\newblock {\em S\'em. Lothar. Combin.}, 54A:Art. B54Aa, 12, 2005/07.

\bibitem{DHKS}
Jes\'{u}s~A. De~Loera, Serkan Ho\c{s}ten, Robert Krone, and Lily Silverstein.
\newblock Average behavior of minimal free resolutions of monomial ideals.
\newblock To appeear in Proceedings of the AMS,\texttt{arXiv:1802.06537},2018.

\bibitem{DPSSW}
Jes\'{u}s~A. De~Loera, Sonja Petrovi\'{c}, Lily Silverstein, Despina Stasi, and
  Dane Wilburne.
\newblock Random monomial ideals.
\newblock {\em J. Algebra}, 519:440--473, 2019.

\bibitem{DHS08}
Persi Diaconis, Susan Holmes, and Svante Janson.
\newblock Threshold graph limits and random threshold graphs.
\newblock {\em Internet Math.}, 5(3):267--320 (2009), 2008.

\bibitem{Diestel}
Reinhard Diestel.
\newblock {\em Graph theory}, volume 173 of {\em Graduate Texts in
  Mathematics}.
\newblock Springer, Heidelberg, fourth edition, 2010.

\bibitem{DE09}
Anton Dochtermann and Alexander Engstr\"om.
\newblock Algebraic properties of edge ideals via combinatorial topology.
\newblock {\em Electron. J. Combin.}, 16(2):Research Paper 2, 24, 2009.

\bibitem{Eisenbud}
David Eisenbud.
\newblock {\em Commutative algebra with a view toward algebraic geometry},
  volume 150 of {\em Graduate Texts in Mathematics}.
\newblock Springer-Verlag, New York, 1995.

\bibitem{ES09}
David Eisenbud and Frank-Olaf Schreyer.
\newblock Betti numbers of graded modules and cohomology of vector bundles.
\newblock {\em J. Amer. Math. Soc.}, 22(3):859--888, 2009.

\bibitem{ES13}
Alexander Engstr\"om and Matthew~T. Stamps.
\newblock Betti diagrams from graphs.
\newblock {\em Algebra Number Theory}, 7(7):1725--1742, 2013.

\bibitem{ES17}
Daniel Erman and Steven~V. Sam.
\newblock Questions about {B}oij-{S}\"oderberg theory.
\newblock {\em Proc. Sympos. Pure Math.}, 95:285--304, 2017.

\bibitem{EY18}
Daniel Erman and Jay Yang.
\newblock Random flag complexes and asymptotic syzygies.
\newblock {\em Algebra Number Theory}, 12(9):2151--2166, 2018.

\bibitem{F12}
Gunnar Fl\o~ystad.
\newblock Boij-{S}\"oderberg theory: introduction and survey.
\newblock In {\em Progress in commutative algebra 1}, pages 1--54. de Gruyter,
  Berlin, 2012.

\bibitem{F90}
Ralf Fr\"oberg.
\newblock On {S}tanley-{R}eisner rings.
\newblock In {\em Topics in algebra, {P}art 2 ({W}arsaw, 1988)}, volume~26 of
  {\em Banach Center Publ.}, pages 57--70. PWN, Warsaw, 1990.

\bibitem{HSS06}
Aric Hagberg, Pieter~J. Swart, and Daniel~A. Schult.
\newblock Designing threshold networks with given structural and dynamical
  properties.
\newblock {\em Phys. Rev. E (3)}, 74(5):No. 056116, 10, 2006.

\bibitem{HK07}
Pinar Heggernes and Dieter Kratsch.
\newblock Linear-time certifying recognition algorithms and forbidden induced
  subgraphs.
\newblock {\em Nordic J. Comput.}, 14(1-2):87--108 (2008), 2007.

\bibitem{HSV14}
J\"urgen Herzog, Leila Sharifan, and Matteo Varbaro.
\newblock The possible extremal {B}etti numbers of a homogeneous ideal.
\newblock {\em Proc. Amer. Math. Soc.}, 142(6):1875--1891, 2014.

\bibitem{MP95}
N.~V.~R. Mahadev and U.~N. Peled.
\newblock {\em Threshold graphs and related topics}, volume~56 of {\em Annals
  of Discrete Mathematics}.
\newblock North-Holland Publishing Co., Amsterdam, 1995.

\bibitem{MS}
Ezra Miller and Bernd Sturmfels.
\newblock {\em Combinatorial commutative algebra}, volume 227 of {\em Graduate
  Texts in Mathematics}.
\newblock Springer-Verlag, New York, 2005.

\bibitem{NS13}
Uwe Nagel and Stephen Sturgeon.
\newblock Combinatorial interpretations of some {B}oij-{S}\"oderberg
  decompositions.
\newblock {\em J. Algebra}, 381:54--72, 2013.

\bibitem{RS09}
Elizabeth Perez~Reilly and Edward~R. Scheinerman.
\newblock Random threshold graphs.
\newblock {\em Electron. J. Combin.}, 16(1):Research Paper 130, 32, 2009.

\bibitem{S16}
Carla~D. Savage.
\newblock The mathematics of lecture hall partitions.
\newblock {\em J. Combin. Theory Ser. A}, 144:443--475, 2016.

\end{thebibliography}


\end{document}